\documentclass[11pt]{amsart}

\usepackage{a4wide, amsmath, amsfonts, amssymb, mathrsfs, amsthm}
\usepackage[hyperfootnotes=false]{hyperref}
\usepackage[dvipsnames]{xcolor}

\allowdisplaybreaks[2]

\numberwithin{equation}{section}

\newtheorem{theorem}{Theorem}[section]
\newtheorem{lemma}[theorem]{Lemma}
\newtheorem{corollary}[theorem]{Corollary}
\newtheorem{remark}[theorem]{Remark}
\newtheorem{proposition}[theorem]{Proposition}
\newtheorem{assumption}[theorem]{Assumption}
\newtheorem{definition}[theorem]{Definition}

\newcommand{\1}{\mathbf{1}}
\newcommand{\dd}{\,\mathrm{d}}

\newcommand{\cA}{\mathcal{A}}
\newcommand{\E}{\mathbb{E}}
\newcommand{\R}{\mathbb{R}}
\newcommand{\N}{\mathbb{N}}
\renewcommand{\P}{\mathbb{P}}
\newcommand{\cB}{\mathcal{B}}
\newcommand{\cF}{\mathcal{F}}
\newcommand{\cN}{\mathcal{N\mkern-2mu N}}
\renewcommand{\epsilon}{\varepsilon}
\renewcommand{\rho}{\varrho}

\DeclareMathOperator{\spn}{span}

\title[Universal approximation property of neural SDEs]{Universal approximation property of neural stochastic differential equations}

\author[Kwossek]{Anna P. Kwossek}
\address{Anna P. Kwossek, University of Mannheim, Germany}
\email{anna.kwossek@uni-mannheim.de}

\author[Pr{\"o}mel]{David J.~Pr{\"o}mel}
\address{David J. Pr{\"o}mel, University of Mannheim, Germany}
\email{proemel@uni-mannheim.de}

\author[Teichmann]{Josef Teichmann}
\address{Josef Teichmann, ETH Zurich, Switzerland}
\email{jteichma@math.ethz.ch}

\date{\today}

\begin{document}
	
\begin{abstract}
  We identify various classes of neural networks that are able to approximate continuous functions locally uniformly subject to fixed global linear growth constraints. For such neural networks the associated neural stochastic differential equations can approximate general stochastic differential equations, both of It{\^o} diffusion type, arbitrarily well. Moreover, quantitative error estimates are derived for stochastic differential equations with sufficiently regular coefficients.
\end{abstract}
	
\maketitle
	
\noindent \textbf{Key words:} feedforward neural network, linear growth, neural stochastic differential equation, universal approximation theorem, ReLU activation function, weighted function space.
	
\noindent \textbf{MSC 2020 Classification:} 41A29, 60H10, 68T07, 91G80.
	
	

\section{Introduction}

Modeling approaches that hybridize the notion of differential equations with neural networks have recently become of interest, see~\cite{E2017,Chen2018}. In particular, neural stochastic differential equations (neural SDEs) have emerged as a powerful mathematical tool for capturing complex dynamical systems that exhibit randomness, see~\cite{Tzen2019,Jia2019,Kidger2021}. Specifically, these are stochastic differential equations in which neural networks are used to parametrize the drift and diffusion coefficient, thus extending the notion of neural ordinary differential equations. Neural SDEs have been successfully applied to develop data-driven methods for modeling, learning, and generating random dynamics due to powerful training technologies. For instance, they serve as continuous-time generative models for irregular time series, see~\cite{Liu2019,Li2020,Kidger2021,Issa2024}, and, notably, as very tractable and universal models for financial markets, thus being of particular interest for financial engineering, see~\cite{Cuchiero2020,Gierjatowicz2022,Cohen2022,Cohen2023,Choudhary2023,Fan2024}. In other words: neural stochastic differential equations constitute a continuous time counterpart of recurrent neural networks.

What motivates many of these applications is the key insight that neural stochastic differential equations are, at least, expected to approximate general SDEs arbitrarily well, thus providing fairly general and flexible models for stochastic processes and time series, such as recurrent neural networks approximate generic discrete dynamics. In fact, classical universal approximation theorems for neural networks, as proven, e.g., in~\cite{Cybenko1989,Hornik1991}, state that neural networks approximate any continuous function arbitrarily well uniformly on compact subsets of $\R^d$ or in an $L^p$-sense globally on $\R^d$. Hence, it seems intuitively reasonable that neural SDEs would inherit the universality of neural networks, allowing them to approximate generic SDEs (under mild regularity conditions). However, classical universal approximation theorems do not guarantee any uniform control of the global growth of the involved neural networks and, therefore, do not rigorously imply a universal approximation property for the associated neural SDEs. 

In this paper, we provide a theoretical justification for the universality of neural SDEs: in Section~\ref{sec: UAT with growth constraint} we identify various classes of neural networks that have the so-called `universal approximation property under a linear growth constraint', that is, are able to approximate continuous functions locally uniformly subject to a given global linear growth constraint. Exemplary classes of neural networks with this universal approximation property include single hidden layer feed-forward neural networks with linearly activating activation functions, such as logistic sigmoid and hyperbolic tangent, and deep feed-forward neural networks combining rather general activation functions with rectified linear unit (ReLU) activation functions. For the proof of these universal approximation theorems with global constraints, we rely on universal approximation theorems on weight spaces, as proven in~\cite{Cuchiero2024}, as well as on $L^p$-spaces, as proven in \cite{Kidger20}, and extend some of the methods of both works.

In Section~\ref{sec: UAP of neural SDEs} we demonstrate that the `universal approximation property under a linear growth constraint' of neural networks guarantees the universality of the associated neural SDEs. Indeed, assuming that an SDE possesses a unique solution, this solution can be approximated arbitrarily well by solutions of neural SDEs in a standard $L^2$-norm for stochastic processes if the involved neural networks do satisfy the `universal approximation property under a linear growth constraint'. Moreover, we derive quantitative error estimates for the approximation of stochastic differential equations with coefficients that fulfill standard conditions such as Lipschitz and H{\"o}lder continuity.

\medskip
\noindent\textbf{Organization of the paper:} In Section~\ref{sec: UAT with growth constraint}, we derive the `universal approximation property under a linear growth constraint' for various classes of neural networks. For these, we prove in Section~\ref{sec: UAP of neural SDEs} that the associated neural SDEs can approximate general SDEs.

\medskip
\noindent\textbf{Acknowledgment:} D.~J.~Pr{\"o}mel and A.~P.~Kwossek gratefully acknowledge financial support through the ``Eliteprogramm f{\"u}r Postdocs'' funded by the Baden-W{\"u}rttemberg Stiftung.

\section{Universal approximation property under a linear growth constraint}\label{sec: UAT with growth constraint}
	
In this section, we identify various classes of neural networks allowing for the approximation of continuous functions locally uniformly subject to a given linear growth constraint.
	
\medskip
	
We start by precisely formulating the aforementioned approximation property. The spaces $\R^k$ and $\R^{n_1 \times n_2}$ are equipped with the Euclidean norm $|\cdot|$. Let $C([0,T] \times \R^k; \R^{n_1 \times n_2})$ be the set of continuous functions $f \colon [0,T] \times \R^k \to \R^{n_1 \times n_2}$. Given a set $K \subset [0,T] \times \R^k$ and $f \in C([0,T] \times \R^k; \R^{n_1 \times n_2})$, we define
\begin{equation*}
  \|f\|_{\infty, K} := \sup_{x\in K} |f(x)|.
\end{equation*}
Moreover, we write $C(\R^d; \R^e)$ for the space of continuous maps $f \colon \R^d \to \R^e$,  $C^0_b(\R^d; \R^e)$ for the space of bounded and continuous functions $f \colon \R^d \to \R^e$, and $C^\infty(\R^d; \R^e)$ for the space of smooth functions $f \colon \R^d \to \R^e$, i.e.~functions with all its derivatives up to arbitrary order being continuous.
	
\begin{definition}\label{def: approximation property}
  A set $\cN \subset C([0,T] \times \R^k; \R^{n_1 \times n_2})$ is said to have the \textup{universal approximation property under a linear growth constraint} if the following property holds:
		
  For every function $f\in C([0,T] \times \R^k; \R^{n_1 \times n_2})$ with at most linear growth, i.e., there exists a constant $C_f > 0$ such that
  \begin{equation*}
     |f(t,x)| \leq C_f (1+|x|), \qquad t\in [0,T], \; x \in \R^k,
  \end{equation*}
  for every $\epsilon \in (0,1)$ and every compact set $K\subset \R^k$, there exists a function $\varphi \in \cN$ such that
  \begin{equation*}
    \|\varphi - f\|_{\infty, [0,T] \times K} \leq \epsilon,
  \end{equation*}
  and there exists a constant $\widetilde{C}_f>0$, not depending on $\epsilon$ and $K$, such that
  \begin{equation*}
	|\varphi(t,x)| \leq \widetilde{C}_f (1+|x|), \qquad t\in [0,T], \; x \in \R^k.
  \end{equation*}
\end{definition}
	
In the following four subsections we provide various classes of neural networks satisfying the universal approximation property under a linear growth constraint.
	
\subsection{Linearly activating activation functions}
	
To introduce the first class of neural networks that have the universal approximation property under a linear growth constraint we rely on the notion of weighted spaces as introduced in \cite{Cuchiero2024} in the context of neural networks. To that end, we fix the weight function
\begin{equation*}
  \psi \colon \R^{k+1} \to (0, \infty), \qquad \psi(x) := 1 + |x|, \qquad x \in \R^{k+1}.
\end{equation*}
The pre-image $\psi^{-1}((0,r])$ is compact in $\R^{k+1}$, for any $r > 0$, and hence, $\psi$ is an admissible weight function and $(\R^{k+1},\psi)$ is a weighted space in the sense of \cite[Section~2.1]{Cuchiero2024}. We further introduce the weighted norm $\|\cdot\|_{\cB_\psi(\R^{k+1};\R^{n_1 \times n_2})}$ as
\begin{equation*}
  \|f\|_{\cB_\psi(\R^{k+1};\R^{n_1 \times n_2})} := \sup_{x \in \R^{k+1}} \frac{|f(x)|}{\psi(x)},
\end{equation*}
for $f \colon \R^{k+1} \to \R^{n_1 \times n_2}$ such that $\sup_{x \in \R^{k+1}} \frac{|f(x)|}{\psi(x)} < \infty$. The space $\cB_\psi(\R^{k+1};\R^{n_1 \times n_2})$ is the weighted function space defined as the $\|\cdot\|_{\cB_\psi(\R^{k+1};\R^{n_1 \times n_2})}$-closure of $C^0_b(\R^{k+1}; \R^{n_1 \times n_2})$. Note that $\cB_\psi(\R^{k+1};\R^{n_1 \times n_2})$ is a separable Banach space when equipped with the norm $\|\cdot\|_{\cB_\psi(\R^{k+1};\R^{n_1 \times n_2})}$, which contains $C^0_b(\R^{k+1}; \R^{n_1 \times n_2})$, whereas $C^0_b(\R^{k+1}; \R^{n_1 \times n_2})$ is of course not separable with respect to the uniform norm.
	
Given an activation function $\rho \in C(\R;\R)$, a \emph{single hidden layer (feed-forward) neural network} $\varphi \colon \R^{n_0} \to \R^{n_1 \times n_2}$ is defined by
\begin{equation}\label{eq: neural network}
  \varphi(x) = \sum_{n=1}^N w_n \rho (a_n^\top x + b_n),
\end{equation}
for $x \in \R^{n_0}$, where $N \in \N$ denotes the number of neurons, where $w_1, \ldots, w_N \in \R^{n_1 \times n_2}$, $a_1, \ldots, a_N \in \R^{n_0}$ and $b_1, \ldots, b_N \in \R$ denote the linear readouts, weight vectors and biases, respectively. For $\rho \in C(\R;\R)$, we denote by $\cN^\rho_{n_0; n_1 \times n_2}$ the set of neural networks of the form~\eqref{eq: neural network} with activation function $\rho$.

Following \cite[Definition~4.3]{Cuchiero2024}, an activation function $\rho \in C(\R;\R)$ is called \emph{linearly activating} if $\cN^\rho_{1;1 \times 1} \subseteq \cB_\psi(\R;\R)$ and $\cN^\rho_{1;1 \times 1}$ is dense in $\cB_\psi(\R;\R)$.
	
\begin{remark}
  An activation function $\rho \in C(\R;\R)$ is linearly activating if it holds that $\lim_{x \to \pm \infty} \frac{|\rho(ax+b)|}{\psi(x)} = 0$ for any $a \in \N_0$, $b \in \R$, and $\rho$ is sigmoidal, i.e., $\lim_{x \to -\infty} \rho(x)= 0$ and $\lim_{x \to \infty} \rho(x) = 1$, see~\cite[Proposition~4.4]{Cuchiero2024}. Examples include the logistic sigmoid $\rho(x) = \frac{1}{1+\exp(-x)}$ and $\rho(x) = \tanh(x)$. Other conditions for   activation functions to be linearly activating are the discriminatory property or conditions on its Fourier transform, which can be found in~\cite[Proposition~4.4]{Cuchiero2024}.
\end{remark}
	
For the single hidden layer neural networks $\cN^\rho_{k+1; n_1 \times n_2}$ with linearly activating activation function~$\rho$, we obtain the following universal approximation theorem allowing for given a linear growth constraint.
	
\begin{theorem}
  If the activation function $\rho \in C(\R;\R)$ is linearly activating, then $\cN^\rho_{k+1; n_1 \times n_2}$ has the universal approximation property under a linear growth constraint in the sense of Definition~\ref{def: approximation property}. Moreover, the constant $\widetilde{C}_f$ in Definition~\ref{def: approximation property} can be chosen to be $\widetilde{C}_f = (1 + T)(1 + C_f)$.
\end{theorem}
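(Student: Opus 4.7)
The plan is to reduce the claim to the density of $\cN^\rho_{k+1;n_1\times n_2}$ in the weighted Banach space $\cB_\psi(\R^{k+1};\R^{n_1\times n_2})$, applied to a compactly supported extension of $f$. Given $f$ with linear growth constant $C_f$, $\epsilon\in(0,1)$, and a compact set $K\subset\R^k$, I would first extend $f$ in the time variable to all of $\R^{k+1}$ by $\bar f(t,x):=f(\max(0,\min(t,T)),x)$, which is continuous and still satisfies $|\bar f(t,x)|\leq C_f(1+|x|)$. I would then multiply by a continuous cut-off $\chi\colon\R^{k+1}\to[0,1]$ of compact support with $\chi\equiv 1$ on a neighbourhood of $[0,T]\times K$, obtaining
\begin{equation*}
\tilde f := \chi\,\bar f \in C^0_b(\R^{k+1};\R^{n_1\times n_2}),
\end{equation*}
which agrees with $f$ on $[0,T]\times K$ and satisfies $|\tilde f(t,x)|\leq C_f(1+|x|)$ globally.

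Since $\tilde f\in C^0_b(\R^{k+1};\R^{n_1\times n_2})\subset \cB_\psi(\R^{k+1};\R^{n_1\times n_2})$, I would then invoke the density of $\cN^\rho_{k+1;n_1\times n_2}$ in $\cB_\psi(\R^{k+1};\R^{n_1\times n_2})$, i.e.\ the multivariate, matrix-valued form of the linearly-activating property, available from the framework of \cite{Cuchiero2024} by coordinate-wise reduction from the one-dimensional scalar case. Thus, for any $\delta\in(0,1]$, there exists $\varphi\in\cN^\rho_{k+1;n_1\times n_2}$ with
\begin{equation*}
|\varphi(t,x) - \tilde f(t,x)| \leq \delta\,\psi(t,x) = \delta\,(1+|(t,x)|) \quad \text{for all } (t,x)\in\R^{k+1}.
\end{equation*}

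To close out, let $C_K:=\sup_{(t,x)\in[0,T]\times K}\psi(t,x)<\infty$. Choosing $\delta:=\min(1,\epsilon/C_K)$ and using $\tilde f=f$ on $[0,T]\times K$ yields $\|\varphi-f\|_{\infty,[0,T]\times K}\leq \delta C_K\leq \epsilon$. For the growth estimate, for $(t,x)\in[0,T]\times\R^k$ I would bound
\begin{equation*}
|\varphi(t,x)|\leq |\tilde f(t,x)|+\delta\,\psi(t,x)\leq C_f(1+|x|)+\delta\,(1+T+|x|)\leq \bigl(C_f + (1+T)\bigr)(1+|x|),
\end{equation*}
using $\psi(t,x)\leq 1+T+|x|\leq (1+T)(1+|x|)$ and $\delta\leq 1$. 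Since $C_f+(1+T)\leq (1+T)(1+C_f)$, this yields the claimed bound with $\widetilde C_f=(1+T)(1+C_f)$, independent of $\epsilon$ and $K$.

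The main obstacle is the density step: the definition of linearly activating only provides density of $\cN^\rho_{1;1\times 1}$ in $\cB_\psi(\R;\R)$, so the higher-dimensional and matrix-valued analogue must be deduced, either by invoking a general Stone--Weierstrass-type result on weighted spaces from \cite{Cuchiero2024} or by a direct tensorisation argument. Once this density is in hand, the remainder is a careful bookkeeping of constants, and the crucial design choice is the cut-off extension $\tilde f$ that preserves the \emph{same} linear growth constant $C_f$, which is precisely what forces $\widetilde C_f$ to depend only linearly on $C_f$ rather than on a local supremum of $f$.
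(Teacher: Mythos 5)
Your proposal follows essentially the same route as the paper: extend $f$ constantly in time, multiply by a compactly supported cut-off so that the result lies in $C^0_b(\R^{k+1};\R^{n_1\times n_2})\subset\cB_\psi(\R^{k+1};\R^{n_1\times n_2})$ with the same weighted norm bound $C_f$, invoke density of $\cN^\rho_{k+1;n_1\times n_2}$ in $\cB_\psi(\R^{k+1};\R^{n_1\times n_2})$, and then read off both the locally uniform estimate and the global linear bound from the weighted-norm estimate. The density step you flag as the ``main obstacle'' is exactly what the paper resolves by citing the global universal approximation theorem \cite[Theorem~4.13]{Cuchiero2024}, which lifts the scalar one-dimensional ``linearly activating'' hypothesis to the multivariate matrix-valued setting via a weighted Stone--Weierstrass argument; with that reference supplied, your bookkeeping of constants matches the paper's and yields $\widetilde C_f=(1+T)(1+C_f)$.
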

	
\begin{proof}
  Let $f\in C([0,T] \times \R^k; \R^{n_1 \times n_2})$ be such that there exists a constant $C_f > 0$ satisfying
  \begin{equation}\label{eq: linear growth}
	|f(t,x)| \leq C_f (1+|x|), \qquad t \in [0,T], \; x \in \R^k.
  \end{equation}
  \emph{Step 1.} We extend $f$ to $\R^{k+1}$ by setting $f(t,x) := f(0,x)$, $t \leq 0$, and $f(t,x) := f(T,x)$, $t \geq T$. Given some function $g \in C^\infty(\R;\R)$ with compact support, $g \colon \R \to [0,1]$, $g(t) = 1$ for $t \in [0,T]$, we now consider $\tilde{f}(t,x) := f(t,x) g(t)$. Note that \eqref{eq: linear growth} holds for $\tilde{f}$, which implies that $\|\tilde{f}\|_{\cB_\psi(\R^{k+1};\R^{n_1 \times n_2})} \leq C_f$.
		
  \emph{Step 2.}
  Suppose that $\epsilon \in (0,1)$ and $K \subset \R^k$ is a compact set. Now there exists $\tilde{f}_{\epsilon,K} \in \cB_\psi(\R^{k+1}; \R^{n_1 \times n_2})$ satisfying~\eqref{eq: linear growth},
  \begin{equation*}
    f(t,x) = \tilde{f}_{\epsilon,K}(t,x), \qquad t \in [0,T], \; x \in K,
  \end{equation*}
  and
  \begin{equation*}
     \|\tilde{f}_{\epsilon,K}\|_{\cB_\psi(\R^{k+1}; \R^{n_1 \times n_2})} \leq \|\tilde{f}\|_{\cB_\psi(\R^{k+1}; \R^{n_1 \times n_2})} \leq C_f.
  \end{equation*}
  More precisely, take $\tilde{g} \in C^\infty(\R^{k+1};\R)$ with compact support, $\tilde{g} \colon \R^{k+1} \to [0,1]$, $\tilde{g}(t,x) = 1$ for $t \in [0,T]$, $x \in K$, and set $\tilde{f}_{\epsilon,K} := \tilde{f} \tilde{g}$. Then $\tilde{f}_{\epsilon,K} \in \cB_\psi(\R^{k+1};\R^{n_1 \times n_2})$.
		
  \emph{Step 3.} Let $\mathcal{H} \subseteq \cB_\psi(\R^{k+1};\R)$ be the additive family given by
  \begin{equation*}
    \mathcal{H} = \{x \mapsto a^\top x + b \, : \, a \in \R^{k+1}, b \in \R\}
  \end{equation*}
  see~\cite[Definition~4.1, Example~4.2]{Cuchiero2024}. We note that any $\varphi \in \cN^\rho_{k+1; n_1 \times n_2}$ is of the form
  \begin{equation*}
    \varphi(x) = \sum_{n=1}^N w_n \rho(h_n(x)),
  \end{equation*}
  where $h_1, \ldots, h_N \in \mathcal{H}$, and $\sup_{x \in \R^{k+1}} \frac{\psi(h(x))}{\psi(x)} < \infty$, for all $h \in \mathcal{H}$. Then, \cite[Theorem~4.13]{Cuchiero2024} gives that $\cN^\rho_{k+1; n_1 \times n_2}$ is dense in $\cB_\psi(\R^{k+1};\R^{n_1 \times n_2})$, i.e., there exists $\varphi \in \cN^\rho_{k+1; n_1 \times n_2}$ with
  \begin{equation*}
    \|\varphi - \tilde{f}_{\epsilon,K}\|_{\cB_\psi(\R^{k+1};\R^{n_1 \times n_2})} \leq \epsilon \bigg(\sup_{(t,x) \in [0,T] \times K} \psi((t,x))\bigg)^{-1}.
  \end{equation*}
  This implies that
  \begin{equation*}
    \|\varphi - f\|_{\infty, [0,T] \times K} = \|\varphi - \tilde{f}_{\epsilon,K}\|_{\infty, [0,T] \times K} \leq \frac{\epsilon}{2},
  \end{equation*}
  and
  \begin{align*}
    |\varphi(t,x)|
	& \leq (\|\varphi - \tilde{f}_{\epsilon,K}\|_{\cB_\psi(\R^{k+1}; \R^{n_1 \times n_2})} + \|\tilde{f}_{\epsilon,K}\|_{\cB_\psi(\R^{k+1}; \R^{n_1 \times n_2})}) \psi((t,x)) \\
	&\leq (1 + C_f)(1 + T)(1 + |x|),
  \end{align*}
  for $t \in [0,T]$, $x \in \R^k$. 
  
  Therefore the universal approximation result on $\cB_\psi(\R^{k+1};\R^{n_1 \times n_2})$ implies the universal approximation property under a linear growth constraint in the sense of Definition~\ref{def: approximation property}.
\end{proof}
	
\subsection{Combining the ReLU activation function and a general activation function}

To allow for an activation function that is not linearly activating, such as the widely used rectified linear unit (ReLU) activation function $\rho(x) := \max(x,0)$, we consider a different neural network architecture.
	
Let $L, N_0, \ldots, N_L \in \N$, and for any $l \in \{1, \ldots, L\}$, let $w_l \colon \R^{N_{l-1}} \to \R^{N_l}$, $x \mapsto A_l x + b_l$, be an affine function with $A_l \in \R^{N_l \times N_{l-1}}$ and $b_l \in \R^{N_l}$. Given an activation function $\rho \in C(\R;\R)$, a \emph{deep (feed-forward) neural network} $\varphi \colon \R^{N_0} \to \R^{N_L}$ is defined by
\begin{equation*}
  \varphi = w_L \circ \rho \circ w_{L-1} \circ \ldots \circ \rho \circ w_1,
\end{equation*}
where $\circ$ denotes the usual composition of functions. Here, $\rho$ is applied componentwise, $L-1$ denotes the number of hidden layers ($L$ is the depth of $\varphi$), and $N_1, \ldots, N_{L-1}$ denote the dimensions (widths) of the hidden layers and $N_0$ and $N_L$ the dimension of the input and the output layer, respectively. 
	
We write $\cN^\rho_{N_0;N_L}$ for the set of deep feed-forward neural networks $\varphi \colon \R \to \R$ with activation function $\rho$, input dimension $N_0$ and output dimension $N_L$ and an arbitrary number of hidden layers $L$, see e.g.~\cite[Appendix~B.1]{Cuchiero2020}. We then write $\cN^\rho_{n_0;n_1,n_2}$ for the set of functions $\varphi \colon \R^{n_0} \to \R^{n_1 \times n_2}$ of the form $\varphi = (\varphi^{ij})_{i = 1, \ldots, n_1, \, j = 1, \ldots, n_2}$, where $\varphi^{ij} \in \cN^{\rho}_{n_0;1}$.
	
When allowing for two activation functions $\rho_1, \rho_2 \in C(\R;\R)$, we write $\cN^{\rho_1,\rho_2}_{N_0;N_L}$ and $\cN^{\rho_1,\rho_2}_{n_0;n_1,n_2}$, respectively.
	
\begin{proposition}\label{prop: UAP with ReLU in weighted space}
  If $\rho_1 \colon \R \to \R$ is non-affine continuous and continuously differentiable at at least one point, with non-zero derivative at that point, and $\rho_2$ is the ReLU activation function, then $\cN^{\rho_1,\rho_2}_{k+1;n_1,n_2}$ has the universal approximation property under a linear growth constraint. Moreover, the constant $\widetilde{C}_f$ in Definition~\ref{def: approximation property} can be chosen to be $\widetilde{C}_f = \sqrt{n_1 n_2} (1+T) (1 + C_f)$.
\end{proposition}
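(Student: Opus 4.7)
The plan is a two-stage approximation: first reduce to the scalar case, introducing the $\sqrt{n_1n_2}$ factor via the Frobenius norm; then on each scalar component use the universal approximation theorem of Kidger and Lyons~\cite{Kidger20} (whose hypotheses on the activation function are exactly those imposed on $\rho_1$) to approximate $f^{ij}$ locally uniformly by a pure $\rho_1$-network; and finally use the ReLU activation $\rho_2$ to clip the resulting approximation so that a global linear growth bound with constant $(1+T)(1+C_f)$ per component is enforced.

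For the reduction, write $f=(f^{ij})_{i,j}$ with each $f^{ij}\in C([0,T]\times\R^k;\R)$ satisfying $|f^{ij}|\leq|f|\leq C_f(1+|x|)$. It suffices to construct, for each $(i,j)$, a network $\varphi^{ij}\in\cN^{\rho_1,\rho_2}_{k+1;1}$ with $\|\varphi^{ij}-f^{ij}\|_{\infty,[0,T]\times K}\leq\epsilon/\sqrt{n_1n_2}$ and $|\varphi^{ij}(t,x)|\leq(1+T)(1+C_f)(1+|x|)$ globally, since then $\varphi=(\varphi^{ij})_{i,j}\in\cN^{\rho_1,\rho_2}_{k+1;n_1,n_2}$ satisfies the required bounds via $|\varphi|^2=\sum_{i,j}|\varphi^{ij}|^2$. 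For each component I would first extend $f^{ij}$ to all of $\R^{k+1}$ as in Step~1 of the preceding proof (freezing $t$ at the endpoints) so that the extension retains its linear growth in $x$, and then invoke Kidger--Lyons on any enlarged compact set $[-T',T']\times K'\supset[0,T]\times K$ to obtain, for any prescribed $\epsilon'\in(0,1)$, a network $\eta\in\cN^{\rho_1}_{k+1;1}\subset\cN^{\rho_1,\rho_2}_{k+1;1}$ with $\|\eta-f^{ij}\|_{\infty,[-T',T']\times K'}\leq\epsilon'$.

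To enforce global linear growth on $\eta$ I would clip it by a ReLU sub-network, exploiting the identities $\max(a,b)=a+\rho_2(b-a)$ and $\min(a,b)=b-\rho_2(b-a)$. Concretely, set
\begin{equation*}
  \varphi^{ij}(t,x) := \min\bigl(M(x),\,\max\bigl(-M(x),\,\eta(t,x)\bigr)\bigr), \qquad M(x):=(1+T)(1+C_f)\mathsf{N}(x),
\end{equation*}
where $\mathsf{N}(x)$ is a ReLU sub-network providing an upper bound on $1+|x|$. On $[0,T]\times K$ the clipping is inactive because $|\eta(t,x)|\leq C_f(1+|x|)+\epsilon'<(1+C_f)(1+|x|)\leq M(x)$ when $\epsilon'<1$, so the approximation of $f^{ij}$ is preserved with error at most $\epsilon'$ (chosen as $\epsilon/\sqrt{n_1n_2}$), while globally $|\varphi^{ij}(t,x)|\leq M(x)$ delivers the linear growth bound.

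The main obstacle is the construction of $\mathsf{N}$, since the Euclidean norm $|x|$ is not exactly ReLU-representable for $k\geq 2$. My approach would be to take $\mathsf{N}(x):=1+\max_{a\in A}\langle a,x\rangle$, where $A\subset\R^k$ is a finite set satisfying $\max_{a\in A}|a|\leq 1+\delta$ and $\mathrm{conv}(A)\supseteq B(0,1)$; this is ReLU-computable by iterating $\max(u,v)=u+\rho_2(v-u)$ and satisfies $1+|x|\leq\mathsf{N}(x)\leq 1+(1+\delta)|x|$, so the spurious factor $(1+\delta)$ can be absorbed by choosing $\delta$ sufficiently small. In the scalar case $k=1$ one has the exact identity $|x|=\rho_2(x)+\rho_2(-x)$, which removes any such approximation and yields the stated constant directly.
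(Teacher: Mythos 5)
Your proof takes a genuinely different route from the paper's. The paper proves Proposition~2.5 by showing $\cN^{\rho_1,\rho_2}_{n_0;1}$ is dense in the weighted space $\cB_\psi(\R^{n_0};\R)$ via a weighted Stone--Weierstrass argument: one builds the bounded trigonometric submodule $\mathcal{W}=\{x\mapsto\cos(h(x))y\}\cup\{x\mapsto\sin(h(x))y\}$, approximates $\cos$ and $\sin$ as \emph{one-dimensional bounded} functions by a $\rho_1$-network clipped to a constant threshold $[-C,C]$ with two ReLU layers, and then transfers the linear growth bound through the weighted norm; the reduction to 1D is what makes the ReLU clipping easy. Your approach is instead constructive in the spirit of the paper's own Propositions~2.6 and~2.8: approximate $f^{ij}$ directly by a multi-dimensional $\rho_1$-network on a compact set, and then enforce growth by a ReLU clamp. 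The difference is your clamping target: the paper cuts $\varphi_1$ off to \emph{zero} outside an enlarged compact set $J$ via a ReLU indicator approximant $U$, which immediately gives $|\varphi|\le|f|+\delta$ everywhere without ever approximating $|x|$; you instead clamp against an explicit ReLU surrogate $M(x)\approx\widetilde C_f(1+|x|)$, which does work but forces you to approximate the Euclidean norm (introducing the slack $(1+\delta)$) and to carry $x$ through the $\rho_1$-layers so that $M(x)$ can be evaluated afterwards. The latter requires mixed-activation layers (identity preserved by ReLU via $y=\rho_2(y)-\rho_2(-y)$ in parallel with the $\rho_1$-computation); the paper sidesteps this entirely in Proposition~2.5 because the $\rho_1$-subnetwork there only ever sees a scalar argument $h(x)$.

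Two small repairs are needed to match the stated claim. First, with $M(x)=(1+T)(1+C_f)\mathsf N(x)$ and $\mathsf N(x)\le 1+(1+\delta)|x|$ you obtain $\widetilde C_f=\sqrt{n_1n_2}\,(1+\delta)(1+T)(1+C_f)$, not $\sqrt{n_1n_2}\,(1+T)(1+C_f)$; to recover the exact constant, set $M(x)=(1+C_f)\mathsf N(x)$ (the clamp is still inactive on $[0,T]\times K$ since there $|\eta|\le C_f(1+|x|)+\epsilon'\le(1+C_f)(1+|x|)\le M(x)$) and choose $\delta\le T$, giving $|\varphi^{ij}|\le(1+C_f)(1+(1+\delta)|x|)\le(1+T)(1+C_f)(1+|x|)$. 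Second, you should spell out the register architecture: the $\rho_1$-block computing $\eta$ and the $\rho_2$-block computing $\mathsf N$ must run in parallel within the same layers, with $\rho_2$-neurons preserving the inputs and intermediate values, exactly as in the paper's Proposition~2.8. Without that, the composition $\min(M(x),\max(-M(x),\eta(t,x)))$ is not obviously an element of $\cN^{\rho_1,\rho_2}_{k+1;1}$. With these fixes the argument is sound, and it is arguably more elementary than the weighted Stone--Weierstrass route, at the cost of a slightly more delicate clamp.
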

	
\begin{remark}
  The condition on $\rho_1$ in Proposition~\ref{prop: UAP with ReLU in weighted space} is rather mild. For instance, it is satisfied by the frequently used activation functions, and it even includes polynomials. Furthermore, one may also consider both $\rho_1$ and $\rho_2$ to be the ReLU activation function.
\end{remark}
	
\begin{proof}[Proof of Proposition~\ref{prop: UAP with ReLU in weighted space}] 
  We first shall prove that $\cN^{\rho_1,\rho_2}_{n_0;1}$ is dense in $\cB_\psi(\R^{n_0};\R)$, $n_0 \in \N$, i.e., for every $f \in \cB_\psi(\R^{n_0};\R)$ and $\epsilon > 0$ there exists some $\varphi \in \cN^{\rho_1,\rho_2}_{n_0;1}$ such that
  \begin{equation}\label{eq: deep networks are dense in weighted space}
	\|\varphi - f\|_{\cB_\psi(\R^{n_0};\R)} = \sup_{x \in \R^{n_0}} \frac{|f(x) - \varphi(x)|}{\psi(x)} < \epsilon.
  \end{equation}
  In this proof, we adapt the methods of~\cite{Cuchiero2024}.
        
  \emph{Step 1.} A vector space $\cA$ of maps $a \colon \R^{n_0} \to \R$ is called a \emph{subalgebra} if $\cA$ is closed under multiplication, i.e., for every $a_1, a_2 \in \cA$ it holds that $a_1 \cdot a_2 \in \cA$. Moreover, $\cA$ is called \emph{point separating} if for every distinct $x_1, x_2 \in \R^{n_0}$, there exists some $a \in \cA$ with $a(x_1) \neq a(x_2)$. $\cA$ \emph{vanishes nowhere} if for every $x \in \R^{n_0}$, there exists some $a \in \cA$ with $a(x) \neq 0$. 
		
  For a given subalgebra $\cA \subseteq C(\R^{n_0};\R)$, a vector subspace $\mathcal{W} \subseteq C(\R^{n_0};\R)$ is called an \emph{$\cA$-submodule} if $a \cdot w \in \mathcal{W}$, for all $a \in \cA$ and $w \in \mathcal{W}$, where $x \mapsto (a \cdot w)(x) := a(x) w(x)$. 
		
  We consider the additive family $\mathcal{H} \subseteq \cB_\psi(\R^{n_0};\R)$ given by
  \begin{equation*}
	\mathcal{H} = \{x \mapsto a^\top x + b \, : \, a \in \R^{n_0}, b \in \R\},
  \end{equation*}
  see~\cite[Definition~4.1, Example~4.2]{Cuchiero2024}, and define $\cA := \spn(\{\cos \circ \, h : h \in \mathcal{H}\} \cup \{\sin \circ \, h : h \in \mathcal{H}\})$. It follows from~\cite[part~(ii) of Lemma~2.7]{Cuchiero2024} that $\cA \subseteq \cB_\psi(\R^{n_0};\R)$ since $(\cos \circ \, h) \vert_{K}$, \sloppy $(\sin \circ \, h) \vert_{K} \in C(K;\R)$, for all $h \in \mathcal{H}$ and compact subsets $K \subset \R$, and $\cos \circ \, h, \sin \circ \, h \in C^0_b(\R^{n_0};\R)$. Moreover, we note that $\cA$ is a subalgebra of $\cB_\psi(\R^{n_0};\R)$. Further, we define the subset $\mathcal{W} := \{\R^{n_0} \ni x \mapsto a(x) y \in \R : a \in \cA, y \in \R\} \subseteq \cB_\psi(\R^{n_0};\R)$, which is a vector subspace (as $\cA \subseteq \cB_\psi(\R^{n_0};\R)$ and $\R$ are both vector (sub)spaces) and an $\cA$-submodule by definition. 
		
  \emph{Step 2.} 
  We observe that $\cA \subseteq \cB_\psi(\R^{n_0};\R)$ vanishes nowhere as $(x \mapsto a(x) := \cos(0) = 1) \in \cA$. Moreover, $\cA \subseteq \cB_\psi(\R^{n_0};\R)$ is point separating and consists only of bounded maps.
		
  Hence, $\mathcal{W}$ is dense in $\cB_\psi(\R^{n_0};\R)$ by the weighted vector valued Stone--Weierstrass theorem~\cite[Theorem~3.8]{Cuchiero2024}.
		
  \emph{Step 3.} In this step we show that for every $f \in C^0_b(\R;\R)$ and $\epsilon > 0$, there exists $\varphi \in \cN^{\rho_1,\rho_2}_{1;1}$ such that
  \begin{equation*}
	\sup_{z \in \R} \frac{|\varphi(z) - f(z)|}{\psi(z)} < \epsilon.
  \end{equation*}
  We use this result in Step~4 to show that $\mathcal{W}$ is contained in the $\|\cdot\|_{\cB_\psi(\R^{n_0};\R)}$-closure of $\cN^{\rho_1,\rho_2}_{n_0;1}$, which then gives~\eqref{eq: deep networks are dense in weighted space}.
		
  Suppose that $f \in C^0_b(\R;\R)$ and $\epsilon > 0$, and define the constant $C := \frac{\epsilon}{3} + \sup_{z \in \R} |f(z)|$. Choose $r > 0$ large enough such that $r \geq 3 C \epsilon^{-1}$, and set $K_r := \psi^{-1}((0,r])$, which is a compact subset in $\R$. Since $\cN^{\rho_1}_{1;1}$ is dense in $C(\R;\R)$ with respect to the locally uniform norm, see~\cite[Proposition~3.12]{Kidger20}, there exists some $\varphi \in \cN^{\rho_1}_{1;1}$ such that
  \begin{equation*}
    \sup_{z \in K_r} |\varphi_1(z) - f(z)| < \frac{\epsilon}{3},
  \end{equation*}
  which implies that $|\varphi_1(z)| \leq C$ for all $z \in K_r$.
		
  Let $g \in C^0_b(\R;\R)$ be the function defined by $g(z) = \min(\max(z,-C),C)$, for $z \in \R$. Thus $g(\varphi_1(z)) = \varphi_1(z)$, for all $z \in K_r$. Then we get that
  \begin{equation*}
	\sup_{z \in \R} \frac{|g(\varphi_1(z)) - f(z)|}{\psi(z)} \leq \sup_{z \in K_r} |\varphi_1(z) - f(z)| + \sup_{z \in \R \setminus K_r} \frac{|g(\varphi_1(z))|}{\psi(z)} + \sup_{z \in \R \setminus K_r} \frac{|f(z)|}{\psi(z)} < \frac{\epsilon}{3} + \frac{2C}{R} \leq \epsilon. 
  \end{equation*}
  We now note that $\R^2 \ni (x,y) \mapsto \max(x,y) = \rho_2(x - y) + y$ and $\R^2 \ni (x,y) \mapsto \min(x,y) = x - \rho_2(x-y)$. This gives that there exists $\varphi \in \cN^{\rho_1,\rho_2}_{1;1}$, by adding two more hidden layers, calculating
  \begin{equation*}
	\varphi(z) := -\rho_2(-\rho_2(\varphi_1(z) + C) + 2 C) + C = \min(\max(\varphi_1(z),-C),C) = g(\varphi_1(z)).
  \end{equation*}
  \emph{Step 4.} In this step we verify that $\mathcal{W}$ is contained in the $\|\cdot\|_{\cB_\psi(\R^{n_0};\R)}$-closure of $\cN^{\rho_1,\rho_2}_{n_0;1}$.
		
  Suppose that $\epsilon > 0$, $h \in \mathcal{H}$, and $y \in \R$. We can assume without loss of generality that $y \neq 0$. Moreover, we consider the finite constant $C_h := \sup_{x \in \R^{n_0}} \frac{\psi(h(x))}{\psi(x)} + 1 > 0$. By Step~3, there exists some $\varphi \in \cN^{\rho_1,\rho_2}_{n_0;1}$ such that
  \begin{equation*}
	\sup_{z \in \R} \frac{|\varphi(z) - \cos(z)|}{\psi(z)} < \frac{\epsilon}{C_h |y|}.
  \end{equation*}
		
  Now, for the function $(x \mapsto w(x) := \cos(h(x))y) \in \mathcal{W}$, we define $(x \mapsto \phi(x):= y \varphi(h(x)))$, which is an element of $\cN^{\rho_1,\rho_2}_{n_0;1}$. Then we have that
  \begin{align*}
	&\|\phi - w\|_{\cB_\psi(\R^{n_0};\R)} = \sup_{x \in \R^{N_0}} \frac{|y \varphi(h(x)) - y \cos(h(x))|}{\psi(x)} \\
	&\quad \leq |y| \sup_{x \in \R^{n_0}} \frac{|\varphi(h(x)) - \cos(h(x))|}{\psi(x)} \\
	&\quad \leq |y| \sup_{x \in \R^{n_0}} \frac{\psi(h(x))}{\psi(x)} \sup_{x \in \R^{n_0}} \frac{|\varphi(h(x)) - \cos(h(x))|}{\psi(h(x))} \\
	&\quad \leq C_h |y| \sup_{z \in \R} \frac{|\varphi(z) - \cos(z)|}{\psi(z)} \\
    &\quad < \epsilon.
  \end{align*}
  Since $\epsilon$ was chosen arbitrarily, the map $(x \mapsto w(x) = \cos(h(x)) y) \in \mathcal{W}$ belongs to the $\|\cdot\|_{\cB_\psi(\R^{n_0};\R)}$-closure of $\cN^{\rho_1,\rho_2}_{n_0;1}$, which holds analogously true for $(x \mapsto \sin(h(x))y) \in \mathcal{W}$. Hence, due to the trigonometric identities for the product of cosine and sine, the entire $\cA$-submodule $\mathcal{W}$ is contained in the $\|\cdot\|_{\cB_\psi(\R^{n_0};\R)}$-closure of $\cN^{\rho_1,\rho_2}_{n_0;1}$.
	
		Since $\mathcal{W}$ is dense in $\cB_\psi(\R^{n_0};\R)$ by Step~2, we obtain that $\cN^{\rho_1,\rho_2}_{n_0;1}$ is dense in $\cB_\psi(\R^{n_0};\R)$, that is,~\eqref{eq: deep networks are dense in weighted space} does hold.
		
		\emph{Step 5.} It remains to show that~\eqref{eq: deep networks are dense in weighted space} implies the universal approximation property  under a linear growth constraint in the sense of Definition~\ref{def: approximation property}. 
		
		Let $f\in C([0,T] \times \R^k;\R^{n_1 \times n_2})$ be such that there exists a constant $C_f>0$ satisfying
		\begin{equation}\label{eq: linear growth in weighted space}
			|f(t,x)| \leq C_f (1+|x|), \qquad t \in [0,T], \; x \in \R^k.
		\end{equation}
		We extend $f$ to $\R^{k+1}$ by setting $f(t,x) := f(0,x)$, $t \leq 0$, and $f(t,x) := f(T,x)$, $t \geq T$. Given some function $g \in C^\infty(\R;\R)$ with compact support, $g \colon \R \to [0,1]$, $g(t) = 1$ for $t \in [0,T]$, we now consider $\tilde{f}(t,x) := f(t,x) g(t)$. Note that \eqref{eq: linear growth in weighted space} holds for $\tilde{f}$, which implies that $\|\tilde{f}\|_{\cB_\psi(\R^{k+1};\R^{n_1 \times n_2})} \leq C_f$.
		
		\emph{Step 6.}
		Suppose that $\epsilon \in (0,1)$ and $K\subset \R^k$ is a compact set.
		Now there exists $\tilde{f}_{\epsilon,K}\in \cB_\psi(\R^{k+1};\R^{n_1 \times n_2})$ satisfying~\eqref{eq: linear growth in weighted space},
		\begin{equation*}
			f(t,x) = \tilde{f}_{\epsilon,K}(t,x), \qquad t \in [0,T], \; x \in K,
		\end{equation*}
		and
		\begin{equation*}
			\|\tilde{f}_{\epsilon,K}\|_{\cB_\psi(\R^{k+1}; \R^{n_1 \times n_2})} \leq \|\tilde{f}\|_{\cB_\psi(\R^{k+1}; \R^{n_1 \times n_2})} \leq C_f.
		\end{equation*}
		More precisely, take $\tilde{g} \in C^\infty(\R^{k+1};\R)$ with compact support, $\tilde{g} \colon \R^{k+1} \to [0,1]$, $\tilde{g}(t,x) = 1$ for $t \in [0,T]$, $x \in K$, and set $\tilde{f}_{\epsilon,K} := \tilde{f} \tilde{g}$. Then $\tilde{f}_{\epsilon,K} \in \cB_\psi(\R^{k+1};\R^{n_1 \times n_2})$.
		
		\emph{Step 7.} We write $f = (f^{ij})_{i = 1, \ldots, n_1, \, j = 1, \ldots, n_2}$, similarly for $\tilde{f}_{\epsilon,K}$, and let $\delta = \frac{\epsilon}{\sqrt{n_1n_2}}$. Then we infer from~\eqref{eq: deep networks are dense in weighted space} that there exist $\varphi^{ij} \in \cN^{\rho_1,\rho_2}_{k+1;1}$, $i = 1, \ldots, n_1$, $j = 1, \ldots, n_2$, such that
		\begin{equation*}
			\|\varphi^{ij} - \tilde{f}_{\epsilon,K}^{ij}\|_{\cB_\psi(\R^{k+1};\R)} \leq \delta \bigg(\sup_{(t,x) \in [0,T] \times K} \psi((t,x))\bigg)^{-1}.
		\end{equation*}
		This implies that
		\begin{equation*}
			\|\varphi^{ij} - f^{ij}\|_{\infty, [0,T] \times K} = \|\varphi^{ij} - \tilde{f}_{\epsilon,K}^{ij}\|_{\infty, [0,T] \times K} \leq \delta,
		\end{equation*}
		and
		\begin{align*}
			|\varphi^{ij}(t,x)|
			& \leq (\|\varphi^{ij} - \tilde{f}_{\epsilon,K}^{ij}\|_{\cB_\psi(\R^{k+1};\R)} + \|\tilde{f}_{\epsilon,K}^{ij}\|_{\cB_\psi(\R^{k+1};\R)}) \psi((t,x)) \\
			&\leq (1 + C_f)(1 + T)(1 + |x|),
		\end{align*}
        for $t \in [0,T]$, $x \in \R^k$. Therefore there exists $\varphi = (\varphi^{ij})_{i = 1, \ldots, n_1, \, j = 1, \ldots, n_2} \in \cN^{\rho_1,\rho_2}_{k+1; n_1,n_2}$ satisfying
		\begin{equation*}
			\|\varphi - f\|_{\infty,[0,T] \times K} \leq \epsilon, \qquad |\varphi(t,x)| \leq \sqrt{n_1 n_2} (1+C_f) (1+T) (1+|x|), \qquad t \in [0,T], \; x \in \R^k,
		\end{equation*}
		which concludes the proof.
	\end{proof}
	
In the course of the proof of Proposition~\ref{prop: UAP with ReLU in weighted space}, we have shown a universal approximation property on the weighted space $\cB_\psi(\R^{n_0};\R)$.
	
\begin{corollary}\label{cor: UAT with ReLU in weighted space}
  If $\rho_1 \colon \R \to \R$ be non-affine continuous and continuously differentiable at at least one point, with non-zero derivative at that point, and $\rho_2$ be the ReLU activation function, then $\cN^{\rho_1,\rho_2}_{n_0;1}$ is dense in $\cB_\psi(\R^{n_0};\R)$, i.e., for every $f \in \cB_\psi(\R^{n_0};\R)$ and $\epsilon > 0$ there exists some $\varphi \in \cN^{\rho_1,\rho_2}_{n_0;1}$ such that
  \begin{equation*}
    \|f - \varphi\|_{\cB_\psi(\R^{n_0};\R)} = \sup_{x \in \R^{n_0}} \frac{|f(x) - \varphi(x)|}{\psi(x)} < \epsilon.
  \end{equation*}
\end{corollary}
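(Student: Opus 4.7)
The statement is precisely what Steps~1--4 of the proof of Proposition~\ref{prop: UAP with ReLU in weighted space} establish as an intermediate conclusion (the argument there first proves density in $\cB_\psi(\R^{n_0};\R)$ and only afterwards upgrades this to the linear growth control on $[0,T]\times K$). So the plan is to isolate and restate that portion of the argument; no new ingredient is needed.

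My approach is to apply the weighted vector-valued Stone--Weierstrass theorem of \cite{Cuchiero2024} to a subalgebra of trigonometric polynomials, and then approximate the generators of this subalgebra in the weighted norm by networks in $\cN^{\rho_1,\rho_2}_{n_0;1}$. First I would take $\mathcal{H}=\{x\mapsto a^\top x+b:a\in\R^{n_0},b\in\R\}$, set $\cA:=\spn(\{\cos\circ h, \sin\circ h:h\in\mathcal{H}\})$, and let $\mathcal{W}:=\{x\mapsto a(x)y:a\in\cA,\,y\in\R\}$. Since $\cA$ consists of bounded continuous functions, is point-separating (via $h(x)=a^\top x$) and vanishes nowhere (since $\cos\circ 0\equiv 1\in\cA$), and $\mathcal{W}$ is an $\cA$-submodule, the weighted Stone--Weierstrass theorem \cite[Theorem~3.8]{Cuchiero2024} yields that $\mathcal{W}$ is dense in $\cB_\psi(\R^{n_0};\R)$.

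The core analytic step is then to show that every $f\in C^0_b(\R;\R)$ can be approximated in $\|\cdot\|_{\cB_\psi(\R;\R)}$ by some $\varphi\in\cN^{\rho_1,\rho_2}_{1;1}$. I would first invoke the classical locally uniform universal approximation theorem for $\rho_1$-networks (\cite[Proposition~3.12]{Kidger20}) to get $\varphi_1\in\cN^{\rho_1}_{1;1}$ approximating $f$ on a sufficiently large compact set $K_r=\psi^{-1}((0,r])$. Outside $K_r$ the approximant $\varphi_1$ can be arbitrarily large, which would spoil the weighted norm; the key trick is to \emph{clamp} it at height $C:=\tfrac{\epsilon}{3}+\sup|f|$ by post-composing with $g(z)=\min(\max(z,-C),C)$. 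The identities $\max(x,y)=\rho_2(x-y)+y$ and $\min(x,y)=x-\rho_2(x-y)$ let us realize $g\circ\varphi_1$ by two additional ReLU layers, yielding an element of $\cN^{\rho_1,\rho_2}_{1;1}$ whose values are bounded by $C$ everywhere; then splitting the supremum over $K_r$ and $\R\setminus K_r$ and choosing $r\geq 3C/\epsilon$ gives the desired weighted bound. This clamping step is the only nontrivial obstacle — once available, the rest is bookkeeping.

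Finally, I would lift this one-dimensional approximation to elements of $\mathcal{W}$: given $w(x)=\cos(h(x))y\in\mathcal{W}$ with $h\in\mathcal{H}$ and $y\in\R$, pick $\varphi\in\cN^{\rho_1,\rho_2}_{n_0;1}$ approximating $\cos$ in the weighted norm and define $\phi(x):=y\varphi(h(x))$. Using $C_h:=\sup_{x}\psi(h(x))/\psi(x)+1<\infty$ and the chain of inequalities
\begin{equation*}
\|\phi-w\|_{\cB_\psi(\R^{n_0};\R)}\leq|y|\,C_h\sup_{z\in\R}\frac{|\varphi(z)-\cos(z)|}{\psi(z)},
\end{equation*}
one gets $\phi\in\cN^{\rho_1,\rho_2}_{n_0;1}$ arbitrarily close to $w$, and analogously for the sine generators. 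Hence $\mathcal{W}$ lies in the weighted closure of $\cN^{\rho_1,\rho_2}_{n_0;1}$, and combining with the density of $\mathcal{W}$ in $\cB_\psi(\R^{n_0};\R)$ closes the proof.
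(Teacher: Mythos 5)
Your proposal is correct and takes essentially the same approach as the paper: you have accurately identified that the corollary is precisely the intermediate density result established in Steps~1--4 of the proof of Proposition~\ref{prop: UAP with ReLU in weighted space}, and you reproduce that argument faithfully, including the weighted Stone--Weierstrass reduction, the clamping trick via ReLU layers, and the lifting along $h\in\mathcal{H}$.
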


\begin{remark}
  A universal approximation property on general weighted spaces has been proven in~\cite[Theorem~4.13]{Cuchiero2024}, by lifting a universal approximation property of one-dimensional neural networks to an infinite dimensional setting. In our setting, we notice that it suffices to have an approximation property on $C^0_b(\R;\R)$ with respect to the weighted norm, and it is a sufficient but not necessary condition that the one-dimensional neural networks be a subset of and dense in $\cB_\psi(\R;\R)$. This allows us to handle activation functions that are not linearly activating, but requires considering deep neural networks and the ReLU activation function instead of single hidden layer neural networks.
\end{remark}
	
\begin{remark}
  In Proposition~\ref{prop: UAP with ReLU in weighted space} and Corollary~\ref{cor: UAT with ReLU in weighted space}, we consider $\cN^{\rho_1,\rho_2}_{n_0;1}$ to be the generally defined class of neural networks. We note however that the functions that do appear here are more precisely linear combinations of neural networks of the form
  \begin{equation*}
	\R^{n_0} \ni x \mapsto -\rho_2(-\rho_2(\varphi_1 (h(x)) + C) + 2C) + C,
  \end{equation*}
  where $C > 0$, $h \in \mathcal{H} = \{\R^{n_0} \ni x \mapsto a^\top x + b \, : \, a \in \R^{n_0}, b \in \R\}$ and $\varphi_1 \in \cN^{\rho_1}_{1;1}$ is a deep feed-forward neural network with activation function $\rho_1$ and fixed width.
				
  The assumption on $\rho_1$ ensures that $\cN^{\rho_1}_{1;1}$ is dense in $C(\R;\R)$ with respect to the locally uniform norm. One may therefore relax this assumption and consider $\rho_1$ to be of the form $\rho(x) = \sin(x) + v(x) \exp(-x)$, for some $v \colon \R \to \R$ that is bounded, continuous and nowhere differentiable, so $\rho_1$ is also nowhere differentiable, see~\cite[Proposition~4.15]{Kidger20}.
		
  It is also possible to assume $\rho_1 \colon \R \to \R$ to be continuous and non-polynomial, and to consider $\varphi_1 \colon \R \to \R$ to be a deep neural network, where each hidden layer has two neurons with the identity activation function and one neuron with activation function $\rho_1$. These, again, are dense in $C(\R,\R)$ with respect to the locally uniform norm, see~\cite[Proposition~4.2]{Kidger20}.
\end{remark}
	
\subsection{The ReLU activation function}
	
We want to further examine the universal approximation property under a linear growth constraint for deep neural networks with the ReLU activation function. We present a constructive proof leading to a slightly stronger result compared to Corollary~\ref{cor: UAT plus bound with ReLU} in the sense that it shows that the constant $\widetilde{C}_f$ does not depend on $T$, and thus allows for approximation results uniformly in time.
	
\begin{proposition}\label{prop: UAP with ReLU}
  If $\rho$ be the ReLU activation function, then $\cN^\rho_{k+1; n_1, n_2}$ has the universal approximation property under a linear growth constraint. Moreover, the constant $\widetilde{C}_f$ in Definition~\ref{def: approximation property} can be chosen to be $\widetilde{C}_f = \sqrt{n_1 n_2} (1 + C_f)$.
\end{proposition}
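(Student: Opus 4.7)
The plan is to combine classical ReLU universal approximation on the compact set $[0,T]\times K$ with a ReLU-computable truncation that enforces the global linear growth bound, the key new ingredient being a cap function built from a piecewise-linear underapproximation of the Euclidean norm. As in Step~7 of the proof of Proposition~\ref{prop: UAP with ReLU in weighted space}, I would first reduce to the scalar case: since each component $f^{ij}$ inherits the linear growth constant $C_f$ from $f$, producing scalar ReLU networks $\varphi^{ij}\in\cN^\rho_{k+1;1}$ with $\|\varphi^{ij}-f^{ij}\|_{\infty,[0,T]\times K}\le \epsilon/\sqrt{n_1 n_2}$ and $|\varphi^{ij}(t,x)|\le (1+C_f)(1+|x|)$ yields the stated bounds for $\varphi=(\varphi^{ij})$ by Cauchy--Schwarz.

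For a single scalar component I would invoke the classical density of deep ReLU networks in $C([0,T]\times K;\R)$ with respect to the uniform norm (see e.g.~\cite[Proposition~4.9]{Kidger20}) to obtain $\varphi_1\in\cN^\rho_{k+1;1}$ with $\|\varphi_1-f^{ij}\|_{\infty,[0,T]\times K}\le \epsilon/\sqrt{n_1 n_2}$, and then truncate it against the cap
\begin{equation*}
M(x):=(1+C_f)\Bigl(1+\max_{i=1,\dots,N}v_i^{\top}x\Bigr),
\end{equation*}
where $v_1,\dots,v_N\in\R^k$ are finitely many unit vectors chosen $\eta$-dense on the unit sphere with $\eta\le(1+C_f)^{-1}$. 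The identity $|x|=\sup_{|v|\le 1}v^{\top}x$ gives $\max_i v_i^{\top}x\le |x|$ everywhere, while for $x\neq 0$ the unit vector $x/|x|$ lies within $\eta$ of some $v_i$, yielding $\max_i v_i^{\top}x\ge(1-\eta)|x|\ge\tfrac{C_f}{1+C_f}|x|$; hence simultaneously $M(x)\le(1+C_f)(1+|x|)$ and $M(x)\ge 1+C_f(1+|x|)$ on all of $\R^k$. Since $M$ is a maximum of affine functions composed with an affine map, it is itself a ReLU network. Setting $\varphi^{ij}(t,x):=\min(M(x),\max(-M(x),\varphi_1(t,x)))$ and implementing $\max(a,b)=\rho(a-b)+b$, $\min(a,b)=a-\rho(a-b)$ via two extra layers gives $\varphi^{ij}\in\cN^\rho_{k+1;1}$. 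On $[0,T]\times K$ one has $|\varphi_1(t,x)|\le |f^{ij}(t,x)|+\epsilon/\sqrt{n_1 n_2}<C_f(1+|x|)+1\le M(x)$, so the truncation is inactive and the approximation is preserved, whereas globally $|\varphi^{ij}(t,x)|\le M(x)\le(1+C_f)(1+|x|)$.

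The main technical obstacle is the construction of $M$: it must be exactly realizable as a ReLU network, must dominate the approximant on $K$ so that truncation is harmless there, yet must be bounded globally by $(1+C_f)(1+|x|)$ in order to obtain the $T$-independent constant that distinguishes this result from Proposition~\ref{prop: UAP with ReLU in weighted space}. The decisive observation is that the dual representation $|x|=\sup_{|v|\le 1}v^{\top}x$ admits finite-dimensional piecewise-linear underapproximations $x\mapsto\max_i v_i^{\top}x$ that are tight to any prescribed ratio uniformly on $\R^k$, precisely the kind of object ReLU networks realize exactly with a single max of affine expressions.
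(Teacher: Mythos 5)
Your proof is correct, but it takes a genuinely different route from the paper's. The paper's proof also starts from the density result \cite[Proposition~4.9]{Kidger20}, but then multiplies by a ReLU-computable bump function $U$ supported in a slightly enlarged box $J\supset K$ and clamps $\varphi_1$ against $\pm CU$, so that the resulting network \emph{vanishes identically} outside $J$. This yields the stronger pointwise bound $|\varphi(x)|\le |f(x)|+\delta$ for all $x$ (recorded separately as Corollary~\ref{cor: UAT plus bound with ReLU}), which holds for arbitrary continuous $f$ without any a priori growth hypothesis and from which the linear growth constant $\widetilde C_f=\sqrt{n_1n_2}(1+C_f)$ drops out as a special case. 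Your construction instead clamps $\varphi_1$ against a global, unbounded cap $M(x)=(1+C_f)\bigl(1+\max_i v_i^\top x\bigr)$ built from an $\eta$-net of directions on the sphere, with $\eta\le(1+C_f)^{-1}$ tuned so that simultaneously $M\le(1+C_f)(1+|\cdot|)$ and $M\ge 1+C_f(1+|\cdot|)$; the arithmetic there is correct (note $v_i^\top(x/|x|)\ge 1-\eta^2/2\ge 1-\eta$). Both constructions give the $T$-independent constant, and your assembly of the matrix-valued network component-by-component matches the paper's Step~7. What your route loses is Corollary~\ref{cor: UAT plus bound with ReLU}: your cap $M$ requires knowing the linear-growth constant $C_f$ in advance and only yields $|\varphi|\le(1+C_f)(1+|\cdot|)$, not $|\varphi|\le|f|+\epsilon$; it also needs $N\sim\eta^{-(k-1)}$ affine pieces, so the auxiliary subnetwork is exponentially large in $k$, whereas the paper's bump needs only $O(k)$ extra layers. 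On the other hand your argument does not require the normalization $K=\prod_i[a_i,b_i]$, and it isolates cleanly the one conceptual ingredient that makes the growth constant $T$-free, namely that $|x|$ admits an exact ReLU-realizable piecewise-linear minorant that is tight up to any prescribed multiplicative factor uniformly on $\R^k$. One small point worth making explicit in a full write-up: to form $\min(M(x),\max(-M(x),\varphi_1(t,x)))$ as a single feed-forward ReLU network you must run $M$ and $\varphi_1$ in parallel, padding the shallower branch with identity layers via $x\mapsto\rho(x+N)-N$; the paper does this via its ``enhanced neuron'' bookkeeping, and your proof implicitly relies on the same device.
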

	
	\begin{proof}
		We shall prove that for any $f \in C(\R^{n_0};\R)$, $n_0 \in \N$, for any $\delta \in (0,1)$ and $K \subset \R^{n_0}$ compact, there exists a neural network $\varphi \in \cN^\rho_{n_0; 1}$ such that
		\begin{equation}\label{eq: UAP with ReLU}
			\|\varphi - f\|_{\infty, K} \leq \delta \qquad \text{and} \qquad |\varphi(x)| \leq |f(x)| + \delta, \qquad x \in \R^{n_0}.
		\end{equation}
		Suppose $K \subset \R^{n_0}$ is a compact set and $\delta \in (0,1)$. Without loss of generality, we assume that $K = \prod_{i=1}^{n_0} [a_i,b_i]$, for some $a_i, b_i \in \R$, $i = 1, \ldots, n_0$. Set $c > 0$ and consider $J = \prod_{i=1}^{N_0} [a_i-c,b_i+c]$.
		
		The proof is similar in spirit to the proof of ~\cite[Theorem~4.16]{Kidger20}. Since $\cN^\rho_{n_0;1}$ is dense in $C(\R^{n_0};\R)$ with respect to the locally uniform norm, see \cite[Proposition~4.9]{Kidger20}, there exists $\varphi_1 \in \cN^\rho_{n_0;1}$ with fixed width $n_0+2$ such that
		\begin{equation}\label{eq: compact-dense}
			\|\varphi_1 - f\|_{\infty, J} \leq \delta.
		\end{equation}
		
		We begin by extending the definition of a neuron, for sake of notation: an enhanced neuron means the composition of an affine map with the activation function $\rho$ with another affine map, and we allow for affine combinations of enhanced neurons.
		In the proof of~\cite[Proposition~4.9]{Kidger20} and in the following, one may use that $x \mapsto \rho(x+N) - N$ equals the identity function for $N$ suitably large, that is, one enhanced neuron may exactly represent the identity function. This allows us, first, to record the inputs in every hidden layer (called in-register neurons) and, second, to preserve the values of the corresponding neurons in the preceding layer.
		
		In each layer of $\varphi_1$, the first $n_0$ neurons are the in-register neurons, then we have the neuron which bases its computations on the in-register neurons applying $\rho$, and finally, the out-register neuron, which we associate the output with.
		
		We now modify $\varphi_1$ and construct $\varphi \in \cN^\rho_{n_0;1}$, by removing the output layer and adding some more hidden layers ($3n_0+1$ to be precise) such that $\varphi$ equals $\varphi_1$ on $K$ and vanishes on $\R^{n_0} \setminus J$, thus \eqref{eq: UAP with ReLU} holds.
		
		To that end, we use that two layers of two enhanced neurons each may represent the continuous piecewise affine function $U_i \colon \R \to \R$, where $U_i(x) = 1$, $x \in [a_i,b_i]$, and $U_i(x) = 0$, $x \in (-\infty, a_i - c] \cup [b_i + c, \infty)$, $i = 1, \ldots, n_0$, see~\cite[Kidger~B.1]{Kidger20}. 
		
		Similarly, one layer of two enhanced neurons may represent $[0,\infty)^2 \ni (x,y) \mapsto \min(x,y)$, see~\cite[Lemma~B.2]{Kidger20}. 
		
		By adding $2n_0$ hidden layers, we are therefore able to store the values of $U_i(x_i)$, $i = 1, \ldots, n_0$, in the in-register neurons. By adding $n_0-1$ hidden layers, we are able to compute and store the value of $U$ in one of the in-register neurons, where
		\begin{equation*}
			U(x) := \min_{i = 1, \ldots, n_0} U_i(x_i),
		\end{equation*}
		which approximates the indicator function $\1_{K}$, mapping into $[0,1]$, with support in $J$, taking value $1$ on $K$, and value $0$ on $\R^{n_0} \setminus J$.
		
		It further holds that $\R^2 \ni (x,y) \mapsto \max(x,y) = \rho(x - y) + y$ and $\R^2 \ni (x,y) \mapsto \min(x,y) = x - \rho(x-y)$. Therefore there exists $\varphi \in \cN^\rho_{n_0;1}$, by adding two more hidden layers and the output layer, calculating
		\begin{equation*}
			\varphi := - \rho(-\rho(\varphi_1 + C U) + 2 C U) + C U = \min(\max(\varphi_1, -C U), C U),
		\end{equation*}
		for some suitable constant $C > 0$ depending only on $f$ and $J$ such that $|\varphi_1(x)| \leq C$ for any $x \in J$, see~\eqref{eq: compact-dense}. 
		
		By definition, it holds that $U(x) = 1$, $x \in K$, and $U(x) = 0$, $x \in \R^{n_0} \setminus J$, thus we deduce that
		\begin{equation}\label{eq: cut-off}
			\varphi(x) = \varphi_1(x), \qquad x \in K, \qquad \text{and} \qquad \varphi(x) = 0, \qquad x \in \R^{n_0} \setminus J.
		\end{equation}
		It then immediately follows from~\eqref{eq: compact-dense} that
		\begin{equation*}
			\|\varphi - f\|_{\infty, K} \leq \delta.
		\end{equation*}
		One can further verify that $|\varphi(x)| \leq |\varphi_1(x)|$, $x \in J$. Combining~\eqref{eq: compact-dense} and~\eqref{eq: cut-off}, we obtain that
		\begin{equation*}
			|\varphi(x)| \leq |f(x)| + \delta, \qquad x \in \R^{n_0}.
		\end{equation*}
		This proves~\eqref{eq: UAP with ReLU}. 
		
		We now show that this implies the universal approximation property with given linear  growth constraint.
		
		Let $f \in C([0,T] \times \R^k; \R^{n_1 \times n_2})$ be such that there exists a constant $C_f > 0$ satisfying 
		\begin{equation*}
			|f(t,x)| \leq C_f (1 + |x|), \qquad t \in [0,T], \; x \in \R^k.
		\end{equation*}
		We extend $f$ to $\R^{k+1}$ by setting $f(t,x) := f(0,x)$, $t \leq 0$, and $f(t,x) := f(T,x)$, $t \geq T$, and write $f = (f^{ij})_{i = 1, \ldots, n_1, \, j = 1, \ldots, n_2}$. Suppose that $K \subset \R^k$ is a compact set and $\epsilon \in (0,1)$, and let $\delta = \frac{\epsilon}{\sqrt{n_1 n_2}}$. Then we have shown that there exist $\varphi^{ij} \in \cN^\rho_{k+1;1}$, $i = 1, \ldots, n_1$, $j = 1, \ldots, n_2$, such that
		\begin{equation*}
			\|\varphi^{ij} - f^{ij}\|_{\infty,[0,T] \times K} \leq \delta \qquad \text{and} \qquad |\varphi^{ij}(t,x)| \leq (1 + C_f)(1 + |x|), \qquad t \in [0,T], \; x \in \R^k.
		\end{equation*}
		This implies that there exists $\varphi = (\varphi^{ij})_{i = 1, \ldots, n_1, \, j = 1, \ldots, n_2} \in \cN^\rho_{k+1; n_1,n_2}$ satisfying
		\begin{equation*}
			\|\varphi - f\|_{\infty,[0,T] \times K} \leq \epsilon, \qquad |\varphi(t,x)| \leq \sqrt{n_1 n_2} (1+C_f) (1+|x|), \qquad t \in [0,T], \; x \in \R^k,
		\end{equation*}
		which concludes the proof.
	\end{proof}
	
In the course of the proof, we have shown the following corollary, which implies the universal approximation property under a linear growth constraint.
	
\begin{corollary}\label{cor: UAT plus bound with ReLU}
  If $\rho$ be the ReLU activation function, then for any $f \in C(\R^{n_0};\R)$, for any $\epsilon \in (0,1)$ and $K \subset \R^{n_0}$ compact, there exists a neural network $\varphi \in \cN^\rho_{n_0;1}$ such that 
  \begin{equation*}
	\|\varphi - f \|_{\infty, K} \leq \epsilon \qquad \text{and} \qquad |\varphi(x)| \leq |f(x)| + \epsilon, \qquad x \in \R^{n_0}.
  \end{equation*}
\end{corollary}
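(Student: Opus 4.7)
The plan is to prove the statement by constructing $\varphi$ as a truncated version of a standard ReLU approximation of $f$. Fix $\epsilon \in (0,1)$ and a compact set $K \subset \R^{n_0}$. Without loss of generality I assume $K = \prod_{i=1}^{n_0}[a_i,b_i]$, and enlarge it slightly to $J = \prod_{i=1}^{n_0}[a_i-c,b_i+c]$ for some $c>0$. Using the classical density of ReLU networks in $C(\R^{n_0};\R)$ with respect to the locally uniform norm (e.g.\ \cite[Proposition~4.9]{Kidger20}), I first select $\varphi_1 \in \cN^\rho_{n_0;1}$ with $\|\varphi_1 - f\|_{\infty,J} \leq \epsilon$. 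This already yields the required approximation on $K$, but $\varphi_1$ may grow uncontrollably outside $J$, which is why a cutoff is needed.

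The next step is to build a ReLU-based bump function supported in $J$ and equal to $1$ on $K$. One-dimensional hat functions $U_i$ with $U_i(x_i) = 1$ for $x_i \in [a_i,b_i]$ and $U_i(x_i) = 0$ outside $[a_i-c,b_i+c]$ can be realised by two enhanced ReLU neurons each. Combining these via iterated minima, using the identity $\min(x,y) = x - \rho(x-y)$, I obtain
\begin{equation*}
  U(x) := \min_{i=1,\ldots,n_0} U_i(x_i) \in [0,1], \qquad U = 1 \text{ on } K, \qquad U = 0 \text{ on } \R^{n_0}\setminus J,
\end{equation*}
all computed by a ReLU sub-network, while propagating the input coordinates and the value $\varphi_1(x)$ forward through in-register neurons that implement the identity via $x = \rho(x+N) - N$ for a sufficiently large shift $N$.

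Finally, I choose $C>0$ with $|\varphi_1(x)| \leq C$ for all $x \in J$ (possible since $J$ is compact and $\varphi_1$ continuous) and set
\begin{equation*}
  \varphi(x) := \min\bigl(\max(\varphi_1(x), -CU(x)),\, CU(x)\bigr),
\end{equation*}
which is representable as a ReLU network via two further applications of $\rho$ and the identities $\max(x,y) = \rho(x-y)+y$, $\min(x,y) = x - \rho(x-y)$. On $K$ one has $U=1$ and $|\varphi_1| \leq C$, so $\varphi = \varphi_1$, giving $\|\varphi - f\|_{\infty,K} \leq \epsilon$. On $\R^{n_0}\setminus J$ one has $U = 0$, which forces $\varphi \equiv 0$, and on $J\setminus K$ the double clipping ensures $|\varphi| \leq |\varphi_1| \leq |f| + \epsilon$ pointwise. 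Collecting these three regimes yields the global bound $|\varphi(x)| \leq |f(x)| + \epsilon$ on all of $\R^{n_0}$.

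The main obstacle is the coordinated bookkeeping inside the feed-forward architecture: one must simultaneously route the input coordinates, the partially computed hat values $U_i$, their running minimum, and the value of $\varphi_1$ through the same layers so that at the clipping layer all ingredients are available in one common hidden width. This is a purely combinatorial widening of the hidden layers with identity-implementing ReLU neurons and does not affect the analytical content of the bounds; everything else is essentially a careful application of the density result of \cite{Kidger20} combined with the ReLU representations of $\min$ and $\max$.
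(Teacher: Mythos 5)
Your proof is correct and follows essentially the same route as the paper's argument inside the proof of Proposition~\ref{prop: UAP with ReLU}: approximate $f$ on an enlarged box $J$ by a ReLU network $\varphi_1$, build a ReLU-realizable bump $U = \min_i U_i$ supported in $J$ and equal to $1$ on $K$ using in-register (identity) neurons, and clip via $\varphi = \min(\max(\varphi_1, -CU), CU)$ to force $\varphi = \varphi_1$ on $K$, $\varphi = 0$ off $J$, and $|\varphi| \le |\varphi_1|$ on $J$. The only differences are cosmetic (you use $\epsilon$ where the paper uses $\delta$, and you do not count the extra hidden layers).
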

	
\subsection{Two activation functions: the ReLU activation function and a squashing activation function}
    
When assuming two activation functions in the neural network architecture, a result analogous to Proposition~\ref{prop: UAP with ReLU} and Corollary~\ref{cor: UAT plus bound with ReLU} can be achieved. For this purpose, we introduce the notion of \emph{squashing} activation functions, i.e., monotone and sigmoidal functions, see~\cite{Hornik1991}. More precisely, $\rho \in C(\R;\R)$ is squashing, if $\rho$ is monotone, $\rho \colon \R \to [a,b]$, for some $a, b \in \R$, and $\lim_{x \to -\infty} \rho(x) = a$, $\lim_{x \to \infty} \rho(x) = b$. We assume without loss of generality that $a = 0$, $b = 1$.
	
\begin{proposition}\label{prop:UAT for ReLU and squashing}
  If $\rho_1 \in C(\R;\R)$ be squashing and continuous non-polynomial and continuously differentiable at at least one point, with non-zero derivative at that point, and $\rho_2$ be the ReLU activation function, then $\cN^{\rho_1, \rho_2}_{k+1; n_1, n_2}$ has the universal approximation property  under a linear growth constraint. Moreover, the constant $\widetilde{C}_f$ in Definition~\ref{def: approximation property} can be chosen to be $\widetilde{C}_f = \sqrt{n_1 n_2} (1+C_f)$.
\end{proposition}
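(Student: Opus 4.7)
The plan is to mimic the proof of Proposition~\ref{prop: UAP with ReLU}, replacing the role of ReLU in the initial approximation step by the squashing activation $\rho_1$, while using the ReLU activation $\rho_2$ for the cut-off construction. Since the class $\cN^{\rho_1,\rho_2}_{n_0;1}$ admits $\rho_1$ and $\rho_2$ in different hidden layers, one may simply concatenate a $\rho_1$-network that approximates $f$ locally uniformly with the ReLU-based cut-off circuit that was already built in the proof of Proposition~\ref{prop: UAP with ReLU}. As in that proof, the target is the scalar statement: for any $f \in C(\R^{n_0};\R)$, any compact $K \subset \R^{n_0}$ and $\delta \in (0,1)$, produce $\varphi \in \cN^{\rho_1,\rho_2}_{n_0;1}$ with $\|\varphi - f\|_{\infty, K} \leq \delta$ and $|\varphi(x)| \leq |f(x)| + \delta$ for every $x \in \R^{n_0}$. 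The matrix-valued, time-extended statement would then follow by the same componentwise assembly used at the end of the proof of Proposition~\ref{prop: UAP with ReLU}, applied with tolerance $\delta = \epsilon/\sqrt{n_1 n_2}$ on $[0,T] \times K$, and yielding the constant $\widetilde{C}_f = \sqrt{n_1 n_2}(1+C_f)$.

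For the scalar step, I would, without loss of generality, write $K = \prod_{i=1}^{n_0}[a_i,b_i]$ and fix an enlargement $J = \prod_{i=1}^{n_0}[a_i-c, b_i+c]$ for some $c > 0$. Since $\rho_1$ is continuous, non-polynomial and continuously differentiable at some point with non-zero derivative, \cite[Proposition~4.9]{Kidger20} gives density of $\cN^{\rho_1}_{n_0;1} \subseteq \cN^{\rho_1,\rho_2}_{n_0;1}$ in $C(\R^{n_0};\R)$ under the locally uniform topology; the additional monotone/squashing hypothesis on $\rho_1$ is not used in this step but is compatible with it. Consequently I can pick $\varphi_1 \in \cN^{\rho_1}_{n_0;1}$ with $\|\varphi_1 - f\|_{\infty, J} \leq \delta$ and set $C := \sup_{x \in J} |\varphi_1(x)| + 1 < \infty$.

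The cut-off step then proceeds verbatim as in the proof of Proposition~\ref{prop: UAP with ReLU}, using only $\rho_2 = \text{ReLU}$. The identity representation $x \mapsto \rho_2(x + N) - N$ for sufficiently large $N$ installs in-register neurons that forward $\varphi_1(x)$ and each coordinate $x_i$ through the subsequent layers without distortion. Two $\rho_2$-layers realise the piecewise affine bump $U_i(x_i)$ that equals $1$ on $[a_i,b_i]$ and vanishes outside $[a_i-c, b_i+c]$, and further $\rho_2$-layers aggregate these via $\min$ into $U(x) := \min_{i=1,\ldots,n_0} U_i(x_i)$, which takes values in $[0,1]$, equals $1$ on $K$ and is supported in $J$. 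Using $\max(x,y) = \rho_2(x-y) + y$ and $\min(x,y) = x - \rho_2(x-y)$, I would finally define
\begin{equation*}
  \varphi(x) := -\rho_2\bigl(-\rho_2(\varphi_1(x) + C U(x)) + 2 C U(x)\bigr) + C U(x) = \min\bigl(\max(\varphi_1(x), -C U(x)), C U(x)\bigr),
\end{equation*}
which belongs to $\cN^{\rho_1,\rho_2}_{n_0;1}$ by construction. Then $\varphi = \varphi_1$ on $K$, $\varphi \equiv 0$ on $\R^{n_0} \setminus J$, and $|\varphi| \leq |\varphi_1|$ pointwise on $J$, which together with $\|\varphi_1 - f\|_{\infty, J} \leq \delta$ yields the two scalar estimates. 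I do not anticipate a serious obstacle beyond careful bookkeeping of the architecture; the only delicate point is ensuring that the mixed $\rho_1$/$\rho_2$ network indeed lies in $\cN^{\rho_1,\rho_2}_{n_0;1}$, which is precisely what the ReLU-implemented identity provides, as it allows the $\rho_1$-based output of $\varphi_1$ to propagate unchanged through the subsequent $\rho_2$-only cut-off layers.
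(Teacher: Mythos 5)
Your proposal is correct and follows the same high-level strategy as the paper: take a $\rho_1$-network $\varphi_1$ approximating $f$ on an enlarged box $J$, keep the raw inputs available via ReLU in-register neurons, build an exact indicator supported in $J$, and clip $\varphi_1$ by $\min(\max(\varphi_1,-CU),CU)$; then assemble componentwise with tolerance $\epsilon/\sqrt{n_1 n_2}$. The place where you deviate is the indicator construction: you reuse the ReLU-only bump functions $U_i(x_i)$ and the ReLU $\min$-aggregation verbatim from Proposition~\ref{prop: UAP with ReLU}, whereas the paper first builds an approximate indicator $\varphi_0\colon\R^{n_0}\to[0,1]$ with two $\rho_1$-layers exploiting the squashing behaviour of $\rho_1$ (so that $\varphi_0 > 1-\eta$ on $K$ and $\varphi_0 < \eta$ off $J$), and only then applies a single ReLU bump $U$ to the scalar $\varphi_0(x)$ to make the indicator exact. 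Both routes are sound, and both rely on the same architectural device you flag (mixed $\rho_1$/$\rho_2$ layers with ReLU in-register neurons carrying the $x_i$, and hence on $N$ large enough that the one-sided truncation $\rho_2(x_i+N)-N=\max(x_i,-N)$ does not affect $U_i$ outside $J$). A notable consequence of your route, worth making explicit, is that the squashing hypothesis on $\rho_1$ is never used: your argument only needs $\rho_1$ to satisfy the Kidger-type density hypothesis, so it actually proves a slightly more general statement than the paper's Proposition~\ref{prop:UAT for ReLU and squashing}, with the same $T$-free constant $\widetilde C_f=\sqrt{n_1 n_2}(1+C_f)$. The paper's $\varphi_0$-based detour is thus a choice of presentation (it isolates where monotonicity/squashing could enter) rather than a logical necessity. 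One small point to tighten in a write-up: be explicit that the ReLU in-register neurons run inside the $\varphi_1$-block as well, not just after it, since otherwise the $x_i$ arriving at the cut-off sublayers would only be $\rho_1$-approximations of the inputs, which need not vanish-preserve $U_i$ outside $J$.
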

	
\begin{remark}
  Examples for activation functions satisfying the assumptions of Proposition~\ref{prop:UAT for ReLU and squashing} are $\rho_1(x) = \frac{1}{1+\exp(-x)}$, $\rho_1(x) = \tanh(x)$ and $\rho_1(x) = \frac{x}{1+|x|}$.
\end{remark}
	
\begin{remark}
  One may relax the assumption that $\rho_1$ is squashing and assume that $\rho_1 \in C(\R;\R)$ be monotone and have one limit, either left or right. Then there exists $\tilde{\rho}_1 \in C(\R;\R)$ that is squashing, given as a composition of an affine map with $\rho_1$ with another affine map and $\rho_1$. This would allow to consider, e.g., $\rho_1(x) = \ln(1 + \exp(x))$.
\end{remark}
	
	\begin{proof}
		We shall prove that for any $f \in C(\R^{n_0};\R)$, $n_0 \in \N$, for any $\delta \in (0,1)$ and $K \subset \R^{n_0}$ compact, there exists a neural network $\varphi \in \cN^{\rho_1,\rho_2}_{n_0; 1}$ such that
        \begin{equation}\label{eq: UAP with ReLU and squashing}
			\|\varphi - f\|_{\infty, K} \leq \delta \qquad \text{and} \qquad |\varphi(x)| \leq |f(x)| + \delta, \qquad x \in \R^{n_0}.
		\end{equation}
		Suppose $K \subset \R^{n_0}$ is a compact set and $\delta \in (0,1)$. Without loss of generality, we assume that $K = \prod_{i=1}^{n_0} [a_i,b_i]$, for some $a_i, b_i \in \R$, $i = 1, \ldots, n_0$. Set $c > 0$ and consider $J = \prod_{i=1}^{N_0} [a_i-c,b_i+c]$.
		
		We follow the constructive proof of Proposition~\ref{prop: UAP with ReLU}. Since $\rho_1$ is assumed to be continuous non-polynomial and continuously differentiable at at least one point, with non-zero derivative at that point, $\cN^{\rho_1}_{n_0;1}$ is dense in $C(\R^{n_0};\R)$ with respect to the locally uniform norm, see \cite[Proposition~4.9]{Kidger20}. That is, there exists $\varphi_1 \in \cN^{\rho_1}_{n_0;1}$ (allowing the identity function in the output layer) with fixed width $n_0+2$ such that
		\begin{equation}\label{eq: compact-dense with squashing}
			\|\varphi_1 - f\|_{\infty,J} \leq \delta.
		\end{equation}
		(We note that $\rho_1$ may be replaced with $\rho_2$.) We begin by extending the definition of a neuron, for sake of notation: an enhanced neuron means the composition of an affine map with the activation function (here, $\rho_2$) with another affine map, and we allow for affine combinations of enhanced neurons. In the proof of~\cite[Proposition~4.9]{Kidger20} and in the following, one uses that $x \mapsto \rho_2(x+N) - N$ equals the identity function for $N$ suitably large, that is, one enhanced neuron may exactly represent the identity function. This allows us, first, to record the inputs in every hidden layer (called in-register neurons), and, second, to preserve the values of the corresponding neurons in the preceding layer.
		
		In each layer of $\phi$, the first $n_0$ neurons are the in-register neurons, then we have the neuron which bases its computations on the in-register neurons applying the activation function, and finally, we have the out-register neuron, which we associate the output with.
		
		We now modify $\varphi_1$ and construct $\varphi \in \cN^{\rho_1,\rho_2}_{n_0;1}$, by removing the output layer and adding some more hidden layers such that $\varphi$ equals $\varphi_1$ on $K$ and vanishes on $\R^{n_0} \setminus J$, thus \eqref{eq: UAP with ReLU and squashing} holds.
		
		We consider $\zeta > 0$ and set $\eta = \frac{1-\zeta}{2(n_0-1)+3}$. Then there exists some threshold $C_\eta > 0$ such that
		\begin{equation*}
			\rho_1(x) \in [0,\eta), \qquad x \leq - C_\eta, \qquad \text{and} \qquad \rho_1(x) \in (1-\eta,1], \qquad x \geq C_\eta.
		\end{equation*}
		We aim to find a neural representation of $\varphi_0 \colon \R \to [0,1]$ which takes values
		\begin{equation}\label{eq: squash}
			\varphi_0(x) \in (1-\eta,1], \qquad x \in K, \qquad \text{and} \qquad \varphi_0(x) \in [0,\eta), \qquad x \in \R^{n_0} \setminus J,
		\end{equation}
		using activation function $\rho_1$, and store the value of $\varphi_1(x)$ in one of the in-register neurons. Then we use that two layers of two enhanced neurons each, now using activation function ReLU, $\rho_2$, may represent the continuous piecewise affine function $U \colon \R \to \R$, where
		\begin{equation*}
			U(x) = 1, \qquad x \in [1-\eta,1], \qquad \text{and} \qquad U(x) = 0, \qquad x \in (-\infty, \eta] \cup [2(1-\eta),\infty),
		\end{equation*}
		see \cite[Lemma~B.1]{Kidger20}, noting that $\eta < 1-\eta < 1 < 2(1-\eta)$.
		
		We are therefore able to compute and store the value of $U_1(x)$ in one of the in-register neurons, where
		\begin{equation*}
			U_1(x) = 1, \qquad x \in K, \qquad \text{and} \qquad U_1(x) = 0, \qquad x \in \R^{n_0} \setminus J,
		\end{equation*}
		which approximates the indicator function $\1_K$.
		
		We proceed as in the proof of Proposition~\ref{prop: UAP with ReLU}: we add two more hidden layers and the output layer, with $\rho_2$, calculating
		\begin{equation*}
			\varphi(x) = \min(\max(\varphi_1,-CU_1),CU_1),
		\end{equation*}
		for some suitable constant $C > 0$ depending only on $f$ and $J$ such that $|\varphi_1(x)| \leq C$ for any $x \in J$, see~\eqref{eq: compact-dense with squashing}. It then follows that there exists $\varphi \in \cN^{\rho_1,\rho_2}_{n_0;1}$ which satisfies~\eqref{eq: UAP with ReLU and squashing} for $\frac{\delta}{2}$.
		
		The rest can be proven following the last paragraph in the proof of Proposition~\ref{prop: UAP with ReLU} verbatim.
		
		It remains to show~\eqref{eq: squash}. We make use of the squashing property of $\rho_1$ and get that one layer of two enhanced neurons may represent the function $h_i \colon \R \to [-1,1]$ that satisfies
		\begin{equation*}
			h_i(x) \in (1-2\eta,1], \quad x \in [a_i,b_i], \quad \text{and} \quad h(x) \in (-\eta,\eta), \quad x \in (-\infty,a_i-c] \cup [b_i+c,\infty),
		\end{equation*}
		namely
		\begin{equation*}
			h_i(x) = \rho_1(c_1(2x+c-2a_i)) - \rho_1(c_1(2x-c-2b_i)),
		\end{equation*}
		where $c_1 =  \frac{C_\eta}{c}$.
		
		We modify $h_i$ by $\tilde{h}_i \colon \R \to [2\eta - 2, 2\eta]$, $x \mapsto h_i(x) - (1 - 2 \eta)$, and it holds that
		\begin{equation*}
			\tilde{h}_i(x) \in (0,2\eta], \quad x \in [a_i,b_i], \quad \enspace \tilde{h}_i(x) \in (-(1-\eta),-(1-3\eta)), \quad x \in (-\infty, a_i - c] \cup [b_i + c, \infty).
		\end{equation*}
		This implies that
		\begin{equation*}
			\sum_{i=1}^{n_0} \tilde{h}_i(x_i) \in (0,\infty), \qquad x \in K, \qquad \text{and} \qquad \sum_{i=1}^{n_0} \tilde{h}_i(x_i) \in (-\infty, -\zeta), \qquad x \in \R^{n_0} \setminus J,
		\end{equation*}
		because if $x \in \R^{n_0} \setminus J$, there exists $i$ such that $x_i \in (-\infty, a_i - c) \cup (b_i + c, \infty)$, that is, $\sum_{i=1}^{n_0} \tilde{h}_i(x_i) \leq 2 \eta (n_0-1) - (1-3\eta) = - \zeta$.
        
		Lastly, since
		\begin{equation*}
			\rho_1(c_2(2x+\zeta)) \in [0,\eta), \qquad x \leq - \zeta, \qquad \text{and} \qquad \rho_1(c_2(2x+\zeta)) \in (1-\eta,1], \qquad x \geq 0,
		\end{equation*}
		for $c_2 = \frac{C_\eta}{\zeta}$, we consider
		\begin{equation*}
			\varphi_0(x) = \rho_1\Big(c_2\Big(2\sum_{i=1}^{n_0} \tilde{h}_i(x_i) + \zeta \Big) \Big),
		\end{equation*}
		which gives~\eqref{eq: squash}.
	\end{proof}

\section{Universal approximation property of neural SDEs}\label{sec: UAP of neural SDEs}
	
In this section, we derive a universal approximation property of neural stochastic differential equations (neural SDEs) assuming that the involved neural networks satisfy the universal approximation property under a linear growth constraint in the sense of Definition~\ref{def: approximation property}. We start by introducing the probabilistic framework.
	
\medskip
	
Let $T > 0$ be a fixed finite time horizon and let $W$ be a $d$-dimensional Brownian motion, defined on a probability space $(\Omega, \cF, \P)$ with a filtration $(\cF_t)_{t \in [0,T]}$ satisfying the usual conditions, i.e., completeness and right-continuity. Throughout this section, we consider the stochastic differential equation
\begin{equation}\label{eq: sde}
  X_t = x_0 + \int_0^t b(s,X_s) \dd s + \int_0^t \sigma(s,X_s) \dd W_s, \qquad t \in [0,T],
\end{equation}
where $x_0 \in \R^k$, $b \colon [0,T]\times \R^k \to \R^k $, $\sigma \colon [0,T]\times \R^k  \to \R^{k \times d}$ are continuous functions, and $\int_0^t \sigma(s,X_s) \dd W_s$ is defined as an It{\^o} integral. For a comprehensive introduction to stochastic It{\^o} integration and differential equations we refer, e.g., to the textbook \cite{Karatzas1991}. Moreover, we make the following assumption.

\begin{assumption}\label{ass:linear growth}
  Let $b \colon [0,T]\times \R^k \to \R^k $ and $\sigma \colon [0,T]\times \R^k  \to \R^{k \times d}$ be continuous functions such that
  \begin{equation*}
    |b(t,x)| + |\sigma(t,x)| \leq C_{b,\sigma} (1+ |x|), \qquad t\in [0,T], \; x \in \R^k,
  \end{equation*}
  for some constant $C_{b,\sigma}>0$.
\end{assumption}

In order to approximate the general SDE~\eqref{eq: sde}, we consider sets $\cN_1 \subset C([0,T]\times \R^k; \R^k)$ and $\cN_2 \subset C([0,T] \times \R^k; \R^{k \times d})$ having the universal approximation property  under a linear growth constraint. For $b_\epsilon \in \cN_1$ and $\sigma_{\epsilon} \in \cN_2$, the associated neural SDE is defined as
\begin{equation}\label{eq: neural sde}
  X^\epsilon_t = x_0 + \int_0^t b_\epsilon(s,X^\epsilon_s) \dd s + \int_0^t \sigma_\epsilon(s,X^\epsilon_s) \dd W_s, \qquad t \in [0,T].
\end{equation}
To ensure the existence of a unique solution~$X^\epsilon$ to the neural SDE~\eqref{eq: neural sde}, it is sufficient that $b_\epsilon$ and $\sigma_\epsilon$ are Lipschitz continuous with at most linear growth. Let $\textup{Lip}([0,T] \times \R^k; \R^{n_1 \times n_2})$ be the set of Lipschitz continuous functions $f\colon [0,T] \times \R^k \to \R^{n_1 \times n_2}$.
	
\begin{remark}
  The Lipschitz assumption on the neural networks is immediately satisfied if the underlying activation functions are Lipschitz continuous. Many frequently used activation functions are, indeed, Lipschitz continuous functions including ReLU, hyperbolic tangent, softsign, softplus and sigmoidal activation functions.
\end{remark}
	
Combining the universal approximation property  under a linear growth constraint and \cite[Theorem~A]{Kaneko1988}, we obtain the following universal approximation property of neural SDEs.
	
\begin{lemma}\label{lem:UAP of neural SDEs}
  Suppose Assumption~\ref{ass:linear growth} and that pathwise uniqueness holds for the SDE~\eqref{eq: sde}. Moreover, suppose that $\cN_1\subset\textup{Lip}([0,T] \times \R^k;\R^k)$ and $\cN_2\subset\textup{Lip}([0,T] \times \R^d;\R^{k \times d})$ have the universal approximation property  under a linear growth constraint in the sense of Definition~\ref{def: approximation property}. Let $K \subset \R^k$ be a compact set. Then for every $\epsilon >0$, there exist $b_{\epsilon}\in \cN_1$ and $\sigma_{\epsilon} \in \cN_2$ such that
  \begin{equation*}
	\sup_{ x_0 \in  K} \E \bigg[\sup_{t\in[0,T]} |X^{\epsilon,x_0}_t-X^{x_0}_t|^2\bigg] \leq \epsilon,
  \end{equation*}
  where $X^{x_0}$ and $X^{\epsilon,{x_0}}$ are the solutions to the SDE~\eqref{eq: sde} and the neural SDE~\eqref{eq: neural sde}, with initial value~$x_0$, respectively.
\end{lemma}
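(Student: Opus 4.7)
The plan is to combine uniform a priori moment estimates for~\eqref{eq: sde} and~\eqref{eq: neural sde} with the stability theorem of Kaneko and Nakao, using the universal approximation property under a linear growth constraint to construct neural coefficients that approximate $(b,\sigma)$ on large balls while respecting a global linear growth bound independent of the approximation parameter. First, I would apply Assumption~\ref{ass:linear growth}, the Burkholder--Davis--Gundy inequality, and Gronwall's lemma to obtain
\begin{equation*}
  \sup_{x_0 \in K} \E\Bigl[\sup_{t \in [0,T]} |X^{x_0}_t|^p\Bigr] \leq M_{K,p}
\end{equation*}
for every $p \geq 2$. By Definition~\ref{def: approximation property}, any $b_\epsilon \in \cN_1$ and $\sigma_\epsilon \in \cN_2$ approximating $b,\sigma$ admit a global linear growth bound with constants not depending on $\epsilon$; the identical argument then yields analogous bounds for $X^{\epsilon, x_0}$ that are uniform in $\epsilon$ and in $x_0 \in K$. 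Markov's inequality then supplies, for any $\eta > 0$, a radius $R = R(\eta) > 0$ so that both $X^{x_0}$ and $X^{\epsilon, x_0}$ remain in the closed ball $\overline{B_R} \subset \R^k$ throughout $[0,T]$ with probability at least $1-\eta$, uniformly in $x_0 \in K$ and in the choice of neural coefficients satisfying the fixed global linear growth.

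Next, since $b$ and $\sigma$ are continuous with linear growth, the universal approximation property under a linear growth constraint applied on the compact set $[0,T] \times \overline{B_R}$ furnishes, for any $\delta > 0$, $b_\delta \in \cN_1$ and $\sigma_\delta \in \cN_2$ with
\begin{equation*}
  \|b_\delta - b\|_{\infty, [0,T] \times \overline{B_R}} + \|\sigma_\delta - \sigma\|_{\infty, [0,T] \times \overline{B_R}} \leq \delta,
\end{equation*}
together with a uniform global linear growth bound. Letting $\delta \to 0$ and $R \to \infty$ jointly yields locally uniform convergence $(b_\delta, \sigma_\delta) \to (b, \sigma)$ with uniform linear growth, so \cite[Theorem~A]{Kaneko1988}, combined with pathwise uniqueness of~\eqref{eq: sde}, forces the solutions of~\eqref{eq: neural sde} to converge to $X^{x_0}$ in probability, uniformly on $[0,T]$. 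The uniform $L^p$ bounds from the previous paragraph, applied with some $p > 2$, deliver uniform integrability of $\sup_{t \in [0,T]} |X^{\delta, x_0}_t - X^{x_0}_t|^2$, which upgrades the convergence in probability to convergence in the $L^2$-norm required by the statement.

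The main obstacle is obtaining this convergence uniformly in $x_0 \in K$. A direct Gronwall-type continuity estimate in the initial condition is unavailable, because the Lipschitz constants of $b_\delta, \sigma_\delta$ may blow up as $\delta \to 0$. I would resolve this by a tightness-and-contradiction argument: if no uniform rate existed, one could extract a sequence $\delta_n \to 0$ and $x_0^n \to x_0^* \in K$ along which $\E[\sup_t |X^{\delta_n, x_0^n}_t - X^{x_0^n}_t|^2]$ stays bounded below by some $\kappa > 0$. The uniform linear growth yields tightness of the joint laws of $(X^{\delta_n, x_0^n}, X^{x_0^n}, W)$ on path space, and a Skorokhod representation together with pathwise uniqueness of~\eqref{eq: sde} at the limiting initial value $x_0^*$ identifies both limiting coordinates with $X^{x_0^*}$; the uniform integrability established above then contradicts the lower bound $\kappa$, completing the proof.
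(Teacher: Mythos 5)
Your proposal follows the same essential route as the paper, which gives only a one-line indication: apply the universal approximation property under a linear growth constraint to obtain coefficients $(b_\epsilon,\sigma_\epsilon)$ with uniform linear growth converging locally uniformly to $(b,\sigma)$, and then invoke the Kaneko--Nakao stability theorem (\cite[Theorem~A]{Kaneko1988}) together with pathwise uniqueness. Your a~priori moment bounds, the Markov-inequality localization, and the observation that the growth constant $\widetilde{C}_f$ from Definition~\ref{def: approximation property} is independent of $\epsilon$ and $K$ are exactly the ingredients that make the cited theorem applicable, so the plan is sound.

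Two remarks on the execution. First, Kaneko--Nakao's Theorem~A already delivers $L^2(\sup)$-convergence, so the detour through convergence in probability followed by a uniform-integrability upgrade using $p>2$ moments is redundant (though harmless, and the $p>2$ moment bound is still useful for the tightness step). Second, and more substantively, the genuine content you add beyond the paper's terse citation is the tightness-and-contradiction argument for uniformity over $x_0\in K$. That step is in the right spirit — Kaneko--Nakao's own proof is a tightness/Skorokhod argument, so a diagonal extraction over $\delta_n\to 0$ and $x_0^n\to x_0^*$ is a natural way to obtain the $\sup_{x_0\in K}$ bound with a single pair $(b_\epsilon,\sigma_\epsilon)$. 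You should, however, make explicit that the second coordinate $X^{x_0^n}$ also requires an argument: continuity of the flow $x_0\mapsto X^{x_0}$ is not assumed (only pathwise uniqueness is), so the identification of the limit of $X^{x_0^n}$ with $X^{x_0^*}$ must again pass through tightness, a Skorokhod representation respecting the common driving noise, and Yamada--Watanabe, just as for the first coordinate. As written, the sentence slides over this; once it is spelled out symmetrically for both coordinates, the contradiction with the persistent lower bound $\kappa>0$ is clean and the proof closes.
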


\begin{remark}
  The uniform linear growth condition, as required in the definition of the universal approximation property under a linear growth constraint (Definition~\ref{def: approximation property}), is a necessary condition for most approximation and stability results for stochastic differential equations, cf. \cite{Stroock2006,Friz2010,Mao2011}. For instance, assuming that the involved neural networks are real analytic, the flow of the associated neural stochastic differential equations is real analytic as well and can be used to approximate the flow of fairly general SDEs, see \cite{Ishiwata1999,Friz2010}. 
\end{remark}

Even though we do not apply the full strength of approximation in weighted spaces in this section, we still want to point out that, in contrast to Lemma \ref{lem:UAP of neural SDEs}, we can actually obtain a global approximation result of quantitative nature for the solutions of differential equations and their flows using weighted norms for the involved coefficients. We do only show it in the case of ordinary differential equations here and leave further investigations on weighted spaces and stochastic differential equations to future research.

\begin{lemma}
  Let $ V_i \colon \mathbb{R}^k \to \mathbb{R}^k$, $i=1,2$, be two $L$-Lipschitz continuous vector fields of at most linear growth, i.e., there exists a constant $L>0$ such that 
  \begin{equation*}
    | V_i (x)-V_i(y)| \leq L |x-y|
    \quad \text{and}\quad 
    | V_i (x)| \leq L (1+|x|),
  \end{equation*}
  for $x,y\in \R^k$. Let $\epsilon > 0$ and suppose that $ \| V_1 - V_2 \|_{\mathcal{B}_{\psi}(\R^k;\R^k)} \leq \epsilon$ with $\psi (x):=1+|x|$. Denote by $ X^i(x)$ the solution of
  \begin{equation*}
    X^i_t(x) = x + \int_0^t V_i(X_s^i(x)) \dd s, \qquad t\in [0,T],
  \end{equation*}
  with $X^i_0(x)=x$ for $i=1,2$ and $x \in \mathbb{R}^k$. Then, for every $T>0$ there is a constant $C > 0$ such that
  \begin{equation*}
    \sup_{t\in [0,T]} | X_t^1(x) -X_t^2(x) | \leq  2 \epsilon \max(1,LT) \exp(2LT) T \psi(x)
  \end{equation*}
  for all $x \in \mathbb{R}^k$.
\end{lemma}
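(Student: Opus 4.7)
The plan is to estimate $|X_t^1(x) - X_t^2(x)|$ by splitting the difference of the vector fields into a Lipschitz part and a weighted-norm part, controlling the resulting integral equation by two successive applications of Gronwall's inequality.

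First I would write, for every $t\in[0,T]$,
\begin{equation*}
 X_t^1(x) - X_t^2(x) = \int_0^t \bigl(V_1(X_s^1(x)) - V_1(X_s^2(x))\bigr)\dd s + \int_0^t \bigl(V_1(X_s^2(x)) - V_2(X_s^2(x))\bigr)\dd s.
\end{equation*}
The $L$-Lipschitz property of $V_1$ bounds the first integrand by $L|X_s^1(x) - X_s^2(x)|$, while the assumption $\|V_1 - V_2\|_{\mathcal{B}_\psi(\R^k;\R^k)}\leq \epsilon$ bounds the second integrand by $\epsilon \psi(X_s^2(x)) = \epsilon(1 + |X_s^2(x)|)$. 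So I need an a priori bound on $|X_s^2(x)|$ in terms of $\psi(x)$.

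To get this, I would apply the linear growth bound $|V_2(y)|\leq L(1+|y|)$ to the integral equation for $X^2$ and invoke Gronwall to obtain $|X_t^2(x)|\leq (|x|+LT)\exp(LT)$. This yields
\begin{equation*}
 \psi(X_t^2(x)) = 1 + |X_t^2(x)| \leq \exp(LT)\bigl(\psi(x) + LT\bigr).
\end{equation*}
The only mildly tricky algebraic point — and the main place to be careful — is to collapse $\psi(x) + LT$ into the form $2\max(1,LT)\psi(x)$. Splitting into the cases $LT\leq 1$ and $LT>1$ and using $\psi(x)\geq 1$ gives $\psi(x) + LT \leq 2\max(1,LT)\psi(x)$ in both regimes, hence $\psi(X_s^2(x))\leq 2\exp(LT)\max(1,LT)\psi(x)$.

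Plugging this back in, I obtain
\begin{equation*}
 |X_t^1(x) - X_t^2(x)| \leq L\int_0^t |X_s^1(x) - X_s^2(x)|\dd s + 2\epsilon \max(1,LT)\exp(LT)\,T\,\psi(x),
\end{equation*}
and a second application of Gronwall's inequality multiplies the inhomogeneous term by $\exp(LT)$, giving the claimed bound
\begin{equation*}
 \sup_{t\in[0,T]} |X_t^1(x)-X_t^2(x)| \leq 2\epsilon\max(1,LT)\exp(2LT)\,T\,\psi(x).
\end{equation*}
No step here is genuinely hard; the only subtlety is matching the precise constant $\max(1,LT)$, which is what forces the case distinction above.
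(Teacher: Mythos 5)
Your proof is correct and follows essentially the same route as the paper: the same decomposition into a Lipschitz term and a weighted-norm term, an a priori Gronwall bound on $|X^2_t(x)|$, and a second Gronwall step. The only cosmetic difference is that you invoke the Lipschitz bound on $V_1$ directly instead of writing the increment via $\int_0^1 \nabla V_1 \dd\theta$ as the paper does, which is in fact cleaner since differentiability of $V_1$ is not assumed.
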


\begin{proof}
  We can write
  \begin{align*}
    X_t^1(x) -X_t^2(x) &= \int_0^t V_1(X_s^1(x)) \dd s-\int_0^t V_2(X_s^2(x)) \dd s \\
    & = \int_0^t (V_1(X_s^1(x)) - V_1(X_s^2(x))) \dd s + \int_0^t (V_1(X_s^2(x)) - V_2(X_s^2(x))) \dd s \\
    & = \int_0^t \int_0^1 \nabla V_1(X_s^1(x) + \theta (X_s^1(x) - X_s^2(x))) \dd \theta \cdot (X_s^1(x) - X_s^2(x)) \dd s \\
    & +\int_0^t \frac{V_1(X_s^2(x)) - V_2(X_s^2(x))}{\psi(X_s^2(x))} \psi(X_s^2(x)) \dd s 
  \end{align*}
  for all $ x \in \mathbb{R}^k$ and $ t\in [0,T]$. Recall that, since $X^2$ is the solution of an ordinary differential equation with coefficient of at most linear growth, a straightforward application of Gronwall's inequality yields
  \begin{equation*}
    |X_t^2(x)| \leq \max(1,Lt) (1+|x|) \exp (Lt), \qquad t\in [0,T], \; x\in \R^k.
  \end{equation*}
  Hence, we obtain
    \begin{equation*}
    | X_t^1(x) -X_t^2(x) | \leq L \int_0^t | X_s^1(x) -X_s^2(x) | \dd s + \epsilon 2 \max(1,Lt) \exp (Lt) \int_0^t \psi(x) \dd s
  \end{equation*}
  for all $ x \in \mathbb{R}^k$ and $ t \in [0,T]$, which allows to conclude the claimed lemma by Gronwall's inequality.
\end{proof}
	
The universal approximation property provided in Lemma~\ref{lem:UAP of neural SDEs} ensures that general SDEs can be approximated arbitrary well by neural SDEs, assuming the corresponding neural networks satisfy the universal approximation property under a linear growth constraint. In the following subsection we deduce quantitative versions of these approximation results.
	
\subsection{Quantitative approximation results for SDEs with Lipschitz continuous coefficients}
	
For SDEs with Lipschitz continuous coefficients, we obtain the following quantitative approximation property of neural SDEs.
	
\begin{proposition}
  Let $p \geq 2$, suppose that Assumption~\ref{ass:linear growth} holds and that the coefficients $b$, $sigma$ of the SDE~\eqref{eq: sde} satisfy
  \begin{equation*}
    |b(t,x)-b(t,y)|+|\sigma (t,x)-\sigma (t,y)| \leq L_{b,\sigma}|x-y|, \qquad t\in [0,T], \; x,y \in \R^k, 
  \end{equation*}
  for some constant $L_{b,\sigma}>0$. Moreover, assume that $\cN_1\subset\textup{Lip}([0,T]\times\R^k;\R^k)$ and $\cN_2\subset\textup{Lip}([0,T]\times\R^k;\R^{k \times d})$ have the universal approximation property  under a linear growth constraint in the sense of Definition~\ref{def: approximation property}. Then for every $\epsilon > 0$, there exist $b_{\epsilon}\in \cN_1$ and $\sigma_{\epsilon} \in \cN_2$ satisfying
  \begin{equation*}
	\|b^{\epsilon} - b\|_{\infty, [0,T]\times K} +  \|\sigma^{\epsilon} - \sigma\|_{\infty, [0,T]\times K} \leq \delta,
  \end{equation*}
  where
  \begin{equation*}
	\delta^p := \frac{\epsilon}{2C} \exp(- C L_{b,\sigma}^2) \qquad \text{with} \qquad C = 2^{2(p-1)} T^{\frac{p}{2}} \Big(T^{\frac{p}{2}} + \Big(\frac{p^3}{2(p-1)}\Big)^{\frac{p}{2}} \Big),
  \end{equation*}
  and
  \begin{equation*}
	K := \{x \in \R^k \, : \, |x|^p \leq r\} \qquad \text{with}\qquad r:= \frac{2^{2p}}{\epsilon} (1+ 3^{2p-1}|x_0|^{2p})(\exp(\tilde{a}) + \exp(a)),
  \end{equation*}
  where
  \begin{equation*}
 	\tilde{a} := 6^{2p-1} \widetilde{C}_{b,\sigma}^{2p} T^p \Big(T^p + \frac{(2p)^{3p}}{2^p(2p-1)^p}\Big) \qquad \text{and} \qquad a := 6^{2p-1} C_{b,\sigma}^{2p} T^p \Big(T^p + \frac{(2p)^{3p}}{2^p(2p-1)^p}\Big),
  \end{equation*}
  where $\widetilde{C}_{b,\sigma} = \max(\widetilde{C}_b, \widetilde{C}_\sigma)$, and $\widetilde{C}_b$ and $\widetilde{C}_\sigma$ are given via Definition~\ref{def: approximation property}, such that
  \begin{equation*}
	\E \Big[ \sup_{t \in [0,T]} |X^\epsilon_t - X_t|^p \Big] \leq \epsilon,
  \end{equation*}
  where $X$ and $X^{\epsilon}$ are the solutions to the SDE~\eqref{eq: sde} and the neural SDE~\eqref{eq: neural sde}, respectively.
\end{proposition}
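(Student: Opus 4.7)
The plan is to estimate $\E[\sup_{t \leq T}|X^\epsilon_t - X_t|^p]$ by a standard SDE stability argument, absorbing the Lipschitz part of the error by Gronwall's inequality and controlling the approximation part by the universal approximation property applied on a compact set that contains the trajectories of both $X$ and $X^\epsilon$ with high probability. Starting from
\begin{equation*}
X^\epsilon_t - X_t = \int_0^t \big(b_\epsilon(s,X^\epsilon_s) - b(s,X_s)\big) \dd s + \int_0^t \big(\sigma_\epsilon(s,X^\epsilon_s) - \sigma(s,X_s)\big) \dd W_s,
\end{equation*}
I would add and subtract $b(s,X^\epsilon_s)$, resp.~$\sigma(s,X^\epsilon_s)$, inside each integrand, take $\sup_{s \le t}$, raise to the $p$-th power, and apply H\"older's inequality to the Lebesgue integral together with the Burkholder--Davis--Gundy inequality at exponent $p$ (with sharp constant $(p^3/(2(p-1)))^{p/2}$) to the It\^o integral. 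The Lipschitz contributions then generate a term proportional to $L_{b,\sigma}^p \int_0^t \E[\sup_{u \le s}|X^\epsilon_u - X_u|^p] \dd s$ which is reabsorbed by Gronwall, yielding a prefactor of the form $C \exp(C L_{b,\sigma}^2)$ in front of a pure approximation error.

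Next I would split the surviving term $\int_0^t \E[|b_\epsilon(s,X^\epsilon_s) - b(s,X^\epsilon_s)|^p] \dd s$ (and its diffusion analogue) on the event $A := \{\sup_{s \le T}|X^\epsilon_s|^p \le r\}$ and its complement. On $A$ we have $X^\epsilon_s \in K := \{x \in \R^k : |x|^p \le r\}$ for all $s \in [0,T]$, so the integrand is pointwise bounded by $\delta^p$ by the universal approximation property applied on $[0,T]\times K$. On $A^c$ I would combine Cauchy--Schwarz with the linear growth bounds $|b_\epsilon(s,x)| \leq \widetilde{C}_{b,\sigma}(1+|x|)$ and $|b(s,x)| \leq C_{b,\sigma}(1+|x|)$, provided respectively by Definition~\ref{def: approximation property} and Assumption~\ref{ass:linear growth}, to bound the contribution by a multiple of $\sqrt{\E[(1+|X^\epsilon_s|)^{2p}]}\sqrt{\P(A^c)}$.

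To close the estimate, I would derive a priori $2p$-th moment bounds for $X$ and $X^\epsilon$: BDG at exponent $2p$ produces the constant $((2p)^3/(2(2p-1)))^p = (2p)^{3p}/(2^p(2p-1)^p)$ visible inside $a$ and $\tilde a$, and combined with the linear growth of the coefficients and Gronwall one obtains $\E[\sup_t|X_t|^{2p}] \lesssim (1+|x_0|^{2p}) \exp(a)$ together with its analogue for $X^\epsilon$ involving $\widetilde{C}_{b,\sigma}$ and $\tilde a$. Markov's inequality then yields $\P(A^c) \le r^{-1}\E[\sup_t|X^\epsilon_t|^p]$, and the threshold~$r$ chosen in the proposition is precisely what is needed to push the $A^c$-contribution below $\epsilon/2$. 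Finally, invoking the universal approximation property on $[0,T]\times K$ produces $b_\epsilon \in \cN_1$ and $\sigma_\epsilon \in \cN_2$ with $\|b_\epsilon - b\|_{\infty,[0,T]\times K} + \|\sigma_\epsilon - \sigma\|_{\infty,[0,T]\times K} \leq \delta$; with $\delta$ as in the proposition the $A$-contribution is then at most $\epsilon/2$.

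The main difficulty is purely bookkeeping: carrying the BDG constants at exponents $p$ (which assemble into $C = 2^{2(p-1)} T^{p/2}(T^{p/2} + (p^3/(2(p-1)))^{p/2})$) and $2p$ (which assemble into $a, \tilde a$) through the H\"older splittings while tracking the combinatorial factors $2^{2(p-1)}$, $3^{2p-1}$, $6^{2p-1}$ exactly. The conceptual subtlety --- and the reason the \emph{linear growth constraint} built into Definition~\ref{def: approximation property} is indispensable --- is that $\widetilde{C}_{b,\sigma}$ is independent of $\epsilon$ and of the compact set, so the $2p$-th moment bound on $X^\epsilon$ (and therefore the radius~$r$) can be fixed \emph{before} selecting the approximating networks; otherwise the choice of $K$ would depend on the unknown~$b_\epsilon, \sigma_\epsilon$ and the argument would be circular.
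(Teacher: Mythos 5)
Your proposal is correct and conceptually matches the paper's proof: fix the compact set $K$ \emph{a priori} using the uniform linear growth constraint, control the probability of escaping $K$ by $2p$-th moment bounds plus Markov (with the BDG constant at exponent $2p$ giving $(2p)^{3p}/(2^p(2p-1)^p)$), and use the universal approximation property on $[0,T]\times K$ together with BDG at exponent $p$ and Gronwall for the error on the good event. You also correctly identify the crucial non-circularity point, namely that $\widetilde{C}_{b,\sigma}$ is independent of $\epsilon$ and $K$, so $r$ can be fixed before the networks are chosen.

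The one genuine difference is where the localization is performed. The paper introduces the stopping time $\tau := \inf\{t : X^\epsilon_t \notin K\}\wedge T$ and splits at the very top level,
\[
\E\Big[\sup_{t}|Z_t|^p\Big] \leq \Big(\E\Big[\big(\sup_t|Z_t|^p\big)^2\Big]\,\P(\tau<T)\Big)^{1/2} + \E\Big[\sup_t|Z_{t\wedge\tau}|^p\Big],
\]
with $Z = X^\epsilon - X$; the Cauchy--Schwarz/Markov term is handled entirely by moment bounds and never sees Gronwall, while Gronwall is applied only to the \emph{stopped} process, for which $X^\epsilon_{s}\in K$ on $[0,\tau)$ makes the approximation-error integrand pointwise $\le \delta^p$. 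You instead apply Gronwall first to the unstopped process and afterwards split the surviving approximation-error integral by the event $A = \{\sup_{s}|X^\epsilon_s|^p\le r\}$. Both routes are sound and produce a bound of the same type, but in your version the $A^c$-contribution is also carried through the Gronwall exponential $\exp(CL_{b,\sigma}^p T)$, so the explicit choice of $r$ (and hence $\delta$) needed to land exactly on $\epsilon$ differs from the constants displayed in the proposition; to reproduce them verbatim one must localize before Gronwall as the paper does. This is a bookkeeping discrepancy, not a gap in the argument.
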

	
\begin{proof}
  First note that for any stochastic process $(Z_t)_{t \in [0,T]}$ and any stopping time $\tau \leq T$, we have that
  \begin{equation*}
	\E \Big[ \sup_{t \in [0,T]} |Z_t|^p \Big] \leq \bigg(\E \bigg[ \Big( \sup_{t \in [0,T]} |Z_t|^p \Big)^2 \bigg] \P(\tau < T) \bigg)^{\frac{1}{2}} + \E \Big[ \sup_{t \in [0,T]} |Z_{t \wedge \tau}|^p \Big].
  \end{equation*}
  Fixing $\epsilon > 0$ and setting $Z := X^\epsilon - X$, we aim to bound each of the summands on the right-hand side by $\frac{\epsilon}{2}$.
		
  By the estimate given in~\cite[Chapter~II, Theorem~4.4]{Mao2011}, we can bound
  \begin{equation*}
	\E \bigg[ \Big( \sup_{t \in [0,T]} |X^\epsilon_t - X_t|^p \Big)^2 \bigg] \leq 2^{2p-1} \Big( \E \Big[\sup_{t \in [0,T]} |X^\epsilon_t|^{2p} \Big] + \E \Big[ \sup_{t \in [0,T]} |X_t|^{2p} \Big] \Big) \leq \frac{\epsilon}{2} r.
  \end{equation*}
  Markov's inequality then implies that
  \begin{equation*}
	\P(\tau < T) \leq \frac{\epsilon}{2} r^{-1},
  \end{equation*}
  for the stopping time $\tau := \inf \{t \geq 0 : X^\epsilon_t \notin K\} \wedge T$. 
		
  Moreover, by Jensen's inequality and~\cite[Chapter~I, Theorem~7.2]{Mao2011}, we obtain that
  \begin{align*}
	&\E[\sup_{t \in [0,u]} |X^\epsilon_{t \wedge \tau} - X_{t \wedge \tau}|^p] \\
	&\quad \leq 2^{p-1} \E \Big[ \Big( \int_0^{u \wedge \tau} |b_\epsilon(s,X^\epsilon_s) - b(s,X_s)| \dd s \Big)^p \Big] \\
	&\qquad + 2^{p-1} \E \Big[ \Big( \sup_{t \in [0,u]} \Big | \int_0^{u \wedge \tau} (\sigma_\epsilon(s,X^\epsilon_s) - \sigma(s,X_s)) \dd W_s \Big| \Big)^p\Big] \\
	&\quad \leq (2T)^{p-1} \E \Big[ \int_0^{u \wedge \tau} |b_\epsilon(s,X^\epsilon_s) - b(s,X_s)|^p \dd s \Big] \\
	&\qquad + 2^{p-1} T^{\frac{p-2}{2}} \Big(\frac{p^3}{2(p-1)}\Big)^{\frac{p}{2}} \E \Big[ \int_0^{u \wedge \tau} |\sigma_\epsilon(s,X^\epsilon_s) - \sigma(s,X_s)|^p \dd s \Big] \\
	&\quad \leq 2^{2(p-1)} T^{p-1} \Bigg(\E \Big[ \int_0^{u \wedge \tau} |b_\epsilon(s,X^\epsilon_s) - b(s,X^\epsilon_s)|^p \dd s \Big] + \E \Big[ \int_0^{r \wedge \tau} |b(s,X^\epsilon_s) - b(s,X_s)|^p \dd s \Big] \Bigg) \\
   	&\qquad + 2^{2(p-1)} T^{\frac{p-2}{2}} \Big(\frac{p^3}{2(p-1)}\Big)^{\frac{p}{2}} \Big( \E \Big[ \int_0^{u \wedge \tau} |\sigma_\epsilon(s,X^\epsilon_s) - \sigma(s,X^\epsilon_s)|^p \dd s \Big] \\
	&\qquad \quad + \E \Big[ \int_0^{u \wedge \tau} |\sigma(s,X^\epsilon_s) - \sigma(s,X_s)|^p \dd s \Big]\Big) \\
	&\quad \leq \delta^p 2^{2(p-1)} T^{\frac{p}{2}} \Big( T^{\frac{p}{2}}  + \Big(\frac{p^3}{2(p-1)}\Big)^{\frac{p}{2}} \Big) \\
	&\qquad + 2^{2(p-1)} T^{\frac{p-2}{2}} \Big(T^{\frac{p}{2}} + \Big(\frac{p^3}{2(p-1)}\Big)^{\frac{p}{2}} \Big) L_{b,\sigma}^p  \int_0^u \E[\sup_{v \in [0,s]} |X^\epsilon_v - X_v|^p] \dd s,
	\end{align*}
	  for any $u \in [0,T]$. By Gr{\"o}nwall's inequality, it then holds that
	\begin{equation*}
      \E \Big[ \sup_{t \in [0,T]} |X^\epsilon_{t \wedge \tau} - X_{t \wedge \tau}|^p \Big] \leq \frac{\epsilon}{2}
	\end{equation*}
	since we have chosen $\delta$ accordingly. Combining the estimates thus concludes the proof.
\end{proof}
	
\subsection{Quantitative approximation results for SDEs with H{\"o}lder continuous diffusion coefficient}
	
In the one-dimensional case, the Lipschitz assumption on the diffusion coefficient~$\sigma$ in the SDE~\eqref{eq: sde} can be relaxed to H{\"o}lder continuity, leading to the following quantitative approximation property of neural SDEs.
	
\begin{proposition}
  Let $k = d = 1$, suppose that Assumption~\ref{ass:linear growth} holds and that the coefficients $b, \sigma$ of the SDE~\eqref{eq: sde} satisfy
  \begin{equation*}
	|b(t,x)-b(t,y)| \leq L_{b,\sigma}|x-y|
	\qquad \text{and}\qquad
	|\sigma (t,x)-\sigma (t,y)| \leq L_{b,\sigma}|x-y|^{\gamma},
  \end{equation*}
  for all $x,y\in \R^k$, $t\in [0,T]$ and for some constant $L_{b,\sigma}>0$ with $\gamma \in [\frac{1}{2},1]$. Moreover, assume that $\cN\subset\textup{Lip}([0,T]\times\R;\R)$ has the universal approximation property  under a linear growth constraint in the sense of Definition~\ref{def: approximation property}. Then for every $\epsilon > 0$, there exist $b_{\epsilon}, \sigma_\epsilon \in \cN$ satisfying
  \begin{equation*}
  	\|b^{\epsilon} - b\|_{\infty, [0,T]\times K} +  \|\sigma^{\epsilon} - \sigma\|_{\infty, [0,T]\times K} \leq \delta,
  \end{equation*}
  where $\alpha > 1$, $\beta > 0$ and $\delta \in (0,1)$ with  
  \begin{equation*}
	\bigg(\beta + \delta T + \frac{2 \alpha}{\beta \log(\alpha)} \delta^2 T + \frac{2 \alpha}{\log(\alpha)} \beta^{2\gamma - 1} L_{b,\sigma}^2 T\bigg)\exp( L_{b,\sigma} T)\leq \frac{\epsilon}{2},
  \end{equation*}
  and
  \begin{equation*}
    K := [-r,r] \qquad \text{with} \qquad r := \frac{4}{\epsilon}(1+ 3|x_0|^2)(\exp((24 T + 6 T^2) \widetilde{C}_{b,\sigma}^2) + \exp((24 T + 6 T^2) C_{b,\sigma}^2)),
  \end{equation*}
  where $\widetilde{C}_{b,\sigma}$ is given in Definition~\ref{def: approximation property}, such that
  \begin{equation*}
	\sup_{t \in [0,T]}  \E \big[ |X^\epsilon_t - X_t| \big] \leq \epsilon,
  \end{equation*}
  where $X$ and $X^{\epsilon}$ are the solutions to the SDE~\eqref{eq: sde} and the neural SDE~\eqref{eq: neural sde}, respectively.
\end{proposition}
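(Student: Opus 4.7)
The plan is to combine a localisation argument in the same spirit as the previous proposition with the classical Yamada--Watanabe approximation of the absolute value, which is the standard device for handling an only $\gamma$-H{\"o}lder diffusion coefficient with $\gamma\in[\tfrac12,1]$. Fix $\epsilon>0$ and the compact set $K=[-r,r]$ as in the statement. First I would show that with the chosen $r$ one has
\begin{equation*}
  \E\Big[\sup_{t\in[0,T]}|X_t|^2\Big] + \E\Big[\sup_{t\in[0,T]}|X_t^\epsilon|^2\Big] \leq \frac{\epsilon}{4} r,
\end{equation*}
by applying the standard moment bound (\cite[Ch.~II, Thm.~4.4]{Mao2011}) to the SDE~\eqref{eq: sde} and to the neural SDE~\eqref{eq: neural sde}, using Assumption~\ref{ass:linear growth} for $X$ and the linear growth constant $\widetilde{C}_{b,\sigma}$ from Definition~\ref{def: approximation property} for $X^\epsilon$. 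Defining $\tau:=\inf\{t\geq 0\colon X_t^\epsilon\notin K\}\wedge T$, Markov's inequality then yields $\P(\tau<T)\leq \frac{\epsilon}{4}r^{-1}$, and the Cauchy--Schwarz splitting
\begin{equation*}
  \E[|X_t^\epsilon-X_t|] \leq \big(\E[|X_t^\epsilon-X_t|^2]\,\P(\tau<T)\big)^{1/2} + \E[|X_{t\wedge\tau}^\epsilon-X_{t\wedge\tau}|]
\end{equation*}
reduces the proof to controlling the stopped difference by $\epsilon/2$.

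The second, and main, step is the Yamada--Watanabe argument for the stopped process. I would introduce, for $\alpha>1$ and $\beta\in(0,1)$, a smooth even function $\phi_{\alpha,\beta}\in C^2(\R;\R)$ with $|x|-\beta\leq \phi_{\alpha,\beta}(x)\leq |x|$, $|\phi'_{\alpha,\beta}|\leq 1$, and $\phi''_{\alpha,\beta}(x)\leq \frac{2}{|x|\log\alpha}\mathbf{1}_{[\beta/\alpha,\beta]}(|x|)$, as in the standard construction; in particular $\phi''_{\alpha,\beta}(x)\cdot |x|^{2\gamma}\leq \frac{2\beta^{2\gamma-1}}{\log\alpha}$. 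Applying It{\^o}'s formula to $\phi_{\alpha,\beta}(X^\epsilon_{t\wedge\tau}-X_{t\wedge\tau})$, taking expectations to kill the martingale part, and using $|\phi'_{\alpha,\beta}|\leq 1$ gives
\begin{equation*}
  \E[\phi_{\alpha,\beta}(X^\epsilon_{t\wedge\tau}-X_{t\wedge\tau})] \leq \E\Big[\int_0^{t\wedge\tau}|b_\epsilon(s,X^\epsilon_s)-b(s,X_s)|\dd s\Big] + \frac{1}{2}\E\Big[\int_0^{t\wedge\tau}\phi''_{\alpha,\beta}(X^\epsilon_s-X_s)|\sigma_\epsilon(s,X^\epsilon_s)-\sigma(s,X_s)|^2\dd s\Big].
\end{equation*}
The drift integral is split via the triangle inequality into an approximation error $\leq \delta T$ and a Lipschitz error $\leq L_{b,\sigma}\int_0^t\E[|X^\epsilon_{s\wedge\tau}-X_{s\wedge\tau}|]\dd s$, using that on $[0,\tau]$ we have $X^\epsilon_s\in K$ so that Definition~\ref{def: approximation property} applies. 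For the diffusion integral I would use $|a+b|^2\leq 2|a|^2+2|b|^2$ to separate the approximation error $|\sigma_\epsilon-\sigma|(s,X^\epsilon_s)\leq\delta$ from the H{\"o}lder error $|\sigma(s,X^\epsilon_s)-\sigma(s,X_s)|\leq L_{b,\sigma}|X^\epsilon_s-X_s|^\gamma$; combined with the key bound $\phi''_{\alpha,\beta}(x)|x|^{2\gamma}\leq \frac{2\beta^{2\gamma-1}}{\log\alpha}$ and $\phi''_{\alpha,\beta}\leq \frac{2\alpha}{\beta\log\alpha}$, this yields the four summands appearing in the hypothesised bound.

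Combining these estimates with $|x|\leq \phi_{\alpha,\beta}(x)+\beta$ gives
\begin{equation*}
  \E[|X^\epsilon_{t\wedge\tau}-X_{t\wedge\tau}|] \leq \beta + \delta T + \frac{2\alpha}{\beta\log\alpha}\delta^2 T + \frac{2\alpha}{\log\alpha}\beta^{2\gamma-1}L_{b,\sigma}^2 T + L_{b,\sigma}\int_0^t\E[|X^\epsilon_{s\wedge\tau}-X_{s\wedge\tau}|]\dd s,
\end{equation*}
and Gr{\"o}nwall's inequality, together with the assumed inequality on $\beta,\delta,\alpha$, produces the bound $\epsilon/2$ on the stopped term. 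Adding the two contributions gives $\sup_{t\in[0,T]}\E[|X^\epsilon_t-X_t|]\leq\epsilon$. The existence of $b_\epsilon,\sigma_\epsilon\in\cN$ realising the supremum norm bound $\delta$ on $[0,T]\times K$ with global linear growth at most $\widetilde{C}_{b,\sigma}(1+|x|)$ is guaranteed by the universal approximation property under a linear growth constraint applied to $b$ and $\sigma$ separately. The main technical subtlety, and the reason for the precise form of the constants, is balancing the parameters $\alpha$ and $\beta$ in the Yamada--Watanabe function so that both the $\delta^2$-term and the $\beta^{2\gamma-1}$-term remain controllable; this is entirely encapsulated in the inequality on $\beta,\delta,\alpha$ in the statement, so the rest of the proof is bookkeeping.
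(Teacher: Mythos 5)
Your proposal follows essentially the same route as the paper's own proof: localization by the exit time $\tau$ of $X^\epsilon$ from $K$ together with the Cauchy--Schwarz/Markov splitting, the Yamada--Watanabe approximation $h$ of $|\cdot|$ combined with It\^o's formula on the stopped difference, separation of the approximation error $\delta$ from the Lipschitz/H\"older error, and Gr\"onwall. The decomposition, the constants, and the role of the linear growth constraint in justifying that the stochastic integral is a true martingale are all the same as in the paper.
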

	
\begin{proof}
  First note tat for any stochastic process $(Z_t)_{t \in [0,T]}$ and any stopping time $\tau \leq T$, we have that
  \begin{equation*}
	\E[|Z_t|]
	\leq (\E[|Z_t|^2] \P(\tau < T))^{\frac{1}{2}} + \E[|Z_{t \wedge \tau}|].
  \end{equation*}
  Fixing $\epsilon>0$, $t \in [0,T]$, and setting $Z = X^\epsilon - X$, we aim to bound each of the summands on the right-hand side by $\frac{\epsilon}{2}$.
		
  By the estimate given in~\cite[Chapter~II, Theorem~4.4]{Mao2011}, we can bound
  \begin{equation*}
    \E [|X^\epsilon_t - X_t|^2] \leq 2 \Big( \E \Big[\sup_{t \in [0,T]} |X^\epsilon_t|^2 \Big] + \E \Big[ \sup_{t \in [0,T]} |X_t|^2 \Big] \Big) \leq \frac{\epsilon}{2} r,
  \end{equation*}
  Markov's inequality then implies that
  \begin{equation*}
    \P(\tau < T) \leq \frac{\epsilon}{2} r^{-1},
  \end{equation*}
  for the stopping time $\tau := \inf \{t \geq 0 : X^\epsilon_t \notin K\} \wedge T$.

    We apply the idea of~\cite{Yamada71} to approximate $x \mapsto |x|$, see also~\cite{Gyongy11}. There exists $h \in C^2(\R)$ such that $|x| \leq \beta + h(x)$, $|h'(x)| \leq 1$, and $h''(x) \leq \frac{2}{|x| \log(\alpha)} \1_{[\frac{\beta}{\alpha},\beta]} (x)$. By It{\^o}'s formula, we then obtain that
    \begin{align*}
        |X^\epsilon_{t \wedge \tau} - X_{t \wedge \tau}|
        & \leq \beta + \int_0^{t \wedge \tau} h^\prime(X^\epsilon_s - X_s) (b^\epsilon(s,X^\epsilon_s) - b(s,X_s)) \dd s\\
        &\quad + \frac{1}{2} \int_0^{t \wedge \tau} h^{\prime\prime}(X^\epsilon_s - X_s) (\sigma^\epsilon(s,X^\epsilon_s) - \sigma_s(s,X_s))^2 \dd s \\
        &\quad + \int_0^{t \wedge \tau} h^\prime(X^\epsilon_s - X_s) (\sigma^\epsilon(s,X^\epsilon_s) - \sigma(s,X_s)) \dd W_s \\
        & \leq \beta + \delta T + L_{b,\sigma} \int_0^{t \wedge \tau} |X^\epsilon_s - X_s| \dd s \\
        &\quad + \frac{2 \alpha}{\beta \log(\alpha)} \delta^2 T + \frac{2 \alpha}{\beta \log(\alpha)} \beta^{2\gamma - 1} L_{b,\sigma}^2 T \\
        &\quad + \int_0^t h^\prime(X^\epsilon_s - X_s) (\sigma^\epsilon(s,X^\epsilon_s) - \sigma(s,X_s)) \dd W_s,
    \end{align*}
    for any $t \in [0,T]$. Let $M_t := \int_0^{t \wedge \tau} h^\prime(X^\epsilon_s - X_s) (\sigma^\epsilon(s,X^\epsilon_s) - \sigma(s,X_s)) \dd W_s$, $t \in [0,T]$. Since $\sigma$ and $\sigma_\epsilon$ are of linear growth and there exists some constant $C_0 > 0$ depending only on $C_{\sigma}$, $\widetilde{C}_\sigma$, $x_0$ and $T$ such that
    \begin{equation*}
        \E[|X_t|^2] + \E[|X^\epsilon_t|^2] \leq C_0^2, \qquad t \in [0,T],
    \end{equation*}
    see e.g.~\cite[Chapter~II, Corollary~4.6]{Mao2011}, it holds that $\E[[M]_t] < \infty$ for any $t \in [0,T]$, where $[M]$ denotes the quadratic variation of $M$. Hence, by \cite[Chapter~II.6, Corollary~3]{Protter05}, $M$ is a martingale. It follows that
    \begin{align*}
        &\E[|X^\epsilon_{t \wedge \tau} - X_{t \wedge \tau}|] \\
        &\quad \leq \beta + \delta T + \frac{2 \alpha}{\beta \log(\alpha)} \delta^2 T + \frac{2 \alpha}{\log(\alpha)} \beta^{2\gamma - 1} L_{b,\sigma}^2 T + L_{b,\sigma} \int_0^{t \wedge \tau} |X^\epsilon_s - X_s| \dd s
    \end{align*}
    and thus Gr{\"o}nwall's inequality yields that
    \begin{equation*}
        \E[|X^\epsilon_{t \wedge \tau} - X_{t \wedge \tau}|] 
        \leq \bigg(\beta + \delta T + \frac{2 \alpha}{\beta \log(\alpha)} \delta^2 T + \frac{2 \alpha}{\log(\alpha)} \beta^{2\gamma - 1} L_{b,\sigma}^2 T\bigg)\exp( L_{b,\sigma} T).
    \end{equation*}
    Choosing $\alpha$, $\beta$ and $\delta$ such that 
    \begin{equation*}
        \bigg(\beta + \delta T + \frac{2 \alpha}{\beta \log(\alpha)} \delta^2 T + \frac{2 \alpha}{\log(\alpha)} \beta^{2\gamma - 1} L_{b,\sigma}^2 T\bigg)\exp( L_{b,\sigma} T)\leq \frac{\epsilon}{2}
    \end{equation*}
    and combining the above estimates we conclude the proof.
\end{proof}

\bibliography{quellen}{}
\bibliographystyle{amsalpha}
	
\end{document}